\newcommand{\menge}[2]{\big\{{#1} \;|\; {#2}\big\}} 
\newcommand{\Menge}[2]{\bigg\{{#1}~\bigg|~{#2}\bigg\}}
\newcommand{\emp}{\ensuremath{{\varnothing}}}
\newcommand{\scal}[2]{\left\langle{#1}\mid {#2} \right\rangle} 
\newcommand{\pscal}[2]{\langle\hspace{-0.2ex}\langle{#1}\mid{#2}\rangle\hspace{-0.2ex}\rangle} 
\newcommand{\vuo}{\ensuremath{\mbox{\footnotesize$\square$}}}
\newcommand{\nnm}{\vert\hspace{-0.2ex}\vert\hspace{-0.2ex}\vert}
\newcommand{\HH}{\ensuremath{\mathcal H}}
\newcommand{\GG}{\ensuremath{\mathcal G}}
\newcommand{\BL}{\ensuremath{\EuScript B}}
\newcommand{\HHH}{\ensuremath{\boldsymbol{\mathcal H}}}
\newcommand{\WW}{\ensuremath{\boldsymbol{W}}}
\newcommand{\KKK}{\ensuremath{\boldsymbol{\mathcal K}}}
\newcommand{\GGG}{\ensuremath{\boldsymbol{\mathcal G}}}
\newcommand{\VV}{\ensuremath{\boldsymbol{V}}}
\newcommand{\CCC}{\ensuremath{\boldsymbol{C}}}
\newcommand{\MM}{\ensuremath{\boldsymbol{M}}}
\newcommand{\RR}{\ensuremath{\mathbb R}}
\newcommand{\KK}{\ensuremath{\mathcal K}}
\newcommand{\RP}{\ensuremath{\left[0,+\infty\right[}}
\newcommand{\RPP}{\ensuremath{\,\left]0,+\infty\right[}}
\newcommand{\NN}{\ensuremath{\mathbb N}}
\newcommand{\dom}{\ensuremath{\operatorname{dom}}}
\newcommand{\prox}{\ensuremath{\operatorname{prox}}}
\newcommand{\cart}{\ensuremath{\mbox{\huge{$\times$}}}}
\newcommand{\TT}{\ensuremath{{\mathbf T}}}
\newcommand{\rh}{\ensuremath{{\mathrm  a}}}
\newcommand{\og}{\ensuremath{{\mathrm  b}}}
\newcommand{\rk}{\ensuremath{{\mathsf {\mathbf  a}}}}
\newcommand{\ck}{\ensuremath{{\mathsf {\mathbf  u}}}}
\newcommand{\xk}{\ensuremath{{\mathsf{\mathbf  x}}}}
\newcommand{\yk}{\ensuremath{{\mathsf{\mathbf  y}}}}
\newcommand{\argmin}{\ensuremath{\operatorname{argmin}}}
\newcommand{\ran}{\ensuremath{\operatorname{ran}}}
\newcommand{\zer}{\ensuremath{\operatorname{zer}}}
\newcommand{\gra}{\ensuremath{\operatorname{gra}}}
\newcommand{\vv}{\ensuremath{\boldsymbol{v}}}
\newcommand{\sss}{\ensuremath{\boldsymbol{s}}}
\newcommand{\xx}{\ensuremath{\boldsymbol{x}}}
\newcommand{\pp}{\ensuremath{\boldsymbol{p}}}
\newcommand{\qq}{\ensuremath{\boldsymbol{q}}}
\newcommand{\yy}{\ensuremath{\boldsymbol{y}}}
\newcommand{\rr}{\ensuremath{\boldsymbol{r}}}
\newcommand{\zz}{\ensuremath{\boldsymbol{z}}}
\newcommand{\bb}{\ensuremath{\boldsymbol{b}}}
\newcommand{\cc}{\ensuremath{\boldsymbol{c}}}
\newcommand{\dd}{\ensuremath{\boldsymbol{d}}}
\newcommand{\aaa}{\ensuremath{\boldsymbol{a}}}
\newcommand{\ww}{\ensuremath{\boldsymbol{w}}}
\newcommand{\BB}{\ensuremath{\boldsymbol{B}}}
\newcommand{\LL}{\ensuremath{\boldsymbol{L}}}
\newcommand{\PPP}{\ensuremath{\mathsf{P}}}
\newcommand{\UU}{\ensuremath{\mathbf{U}}}
\newcommand{\E}{\ensuremath{\mathsf{E}}}
\newcommand{\AAA}{\ensuremath{\boldsymbol{A}}}
\newcommand{\BBB}{\ensuremath{\boldsymbol{B}}}
\newcommand{\QQ}{\ensuremath{\boldsymbol{Q}}}
\newcommand{\SSS}{\ensuremath{\boldsymbol{S}}}
\newcommand{\DD}{\ensuremath{\boldsymbol{D}}}
\newcommand{\FF}{\ensuremath{\EuScript F}}
\newcommand{\Id}{\ensuremath{\operatorname{Id}}}
\newcommand{\weakly}{\ensuremath{\rightharpoonup}}
\newtheorem{theorem}{Theorem}[section]
\newtheorem{lemma}[theorem]{Lemma}
\newtheorem{corollary}[theorem]{Corollary}
\theoremstyle{plain}{\theorembodyfont{\rmfamily}
}
\theoremstyle{plain}{\theorembodyfont{\rmfamily}
}
\theoremstyle{plain}{\theorembodyfont{\rmfamily}
\newtheorem{algorithm}[theorem]{Algorithm}}
\theoremstyle{plain}{\theorembodyfont{\rmfamily}
\newtheorem{example}[theorem]{Example}}
\theoremstyle{plain}{\theorembodyfont{\rmfamily}
\newtheorem{problem}[theorem]{Problem}}
\theoremstyle{plain}{\theorembodyfont{\rmfamily}
\newtheorem{remark}[theorem]{Remark}}
\theoremstyle{plain}{\theorembodyfont{\rmfamily}
}
\definecolor{labelkey}{rgb}{0,0.08,0.45}
\definecolor{refkey}{rgb}{0,0.6,0.0}
\definecolor{Brown}{rgb}{0.45,0.0,0.05}
\definecolor{dgreen}{rgb}{0.00,0.49,0.00}
\definecolor{dblue}{rgb}{0,0.08,0.75}
\numberwithin{equation}{section}
\begin{document}
\title{\sffamily 
A stochastic  inertial forward-backward splitting algorithm for multivariate monotone inclusions
\thanks{This material is based upon work supported by the Center for Brains, Minds and Machines (CBMM), funded by NSF STC award CCF-1231216.
L. Rosasco acknowledges the financial support of the Italian Ministry of Education, University and Research FIRB project RBFR12M3AC.
S. Villa is member of the Gruppo Nazionale per
l'Analisi Matematica, la Probabilit\`a e le loro Applicazioni (GNAMPA)
of the Istituto Nazionale di Alta Matematica (INdAM). 
Bang Cong Vu's research work is partially funded by Vietnam National Foundation for Science
and Technology Development (NAFOSTED) under Grant No. 102.01-2014.02.}}
\author{Lorenzo Rosasco$^{1,2}$, Silvia Villa$^2$ and
B$\grave{\text{\u{a}}}$ng C\^ong V\~u$^2$
\\[5mm]
\small
\small $\!^1$ DIBRIS, Universit\`a degli Studi di Genova\\
\small  Via Dodecaneso 35, 16146, Genova, Italy\\
\small \ttfamily{lrosasco@mit.edu}
\\[5mm]
\small
\small $\!^2$ LCSL, Istituto Italiano di Tecnologia\\
\small and Massachusetts Institute of Technology,\\
\small Bldg. 46-5155, 77 Massachusetts Avenue, Cambridge, MA 02139, USA\\
\small \ttfamily{\{Silvia.Villa,Cong.Bang\}@iit.it}
}
\date{~}

\maketitle
\begin{abstract}
We propose an inertial forward-backward splitting algorithm to compute the 
zero of a sum of two monotone operators allowing for stochastic errors in the computation of the operators. 
More precisely, we establish almost sure convergence in real Hilbert spaces of the sequence of 
iterates to an optimal solution. Then, based on this analysis, we introduce two new classes of  
stochastic inertial primal-dual splitting methods for solving structured 
systems of composite monotone inclusions and prove their convergence. 
Our results extend  to the stochastic and inertial setting various types of structured monotone inclusion problems and corresponding algorithmic solutions. 
Application to minimization problems is discussed.
\end{abstract}

{\bf Keywords:} 
monotone inclusion,
monotone operator,
operator splitting,
cocoercive operator,
forward-backward algorithm,
composite operator,
duality,
primal-dual algorithm

{\bf Mathematics Subject Classifications (2010)}: 47H05, 49M29, 49M27, 90C25



\section{Introduction}
\label{intro}
A wide class of problems reduces to the problem of finding a zero point of the sum of a maximally monotone operator $A$  and a cocoercive operator $B$ acting on a real Hilbert space $\HH$.
Problems of the above form arise  in diverse areas of applied mathematics, including
partial differential equations \cite{Sib70},  mechanics and evolution inclusions \cite{plc2010,Glow89,Har81,mercier79,Merc80}, signal and image processing and inverse problems \cite{siam05,Dau04},  convex optimization, statistics and learning theory \cite{Devi11,Duch09,MRSVV10,RosVilMos13,VilSal13}, game theory \cite{Bric13}, variational inequalities \cite{Facc03,Tseng90,Tseng91,Zhud96}, and stochastic optimization \cite{Nem09,Barty07,Bennar07}.
 One of the most popular approaches to approximate a solution  
is the forward-backward splitting method \cite{plc04,siam05,mercier79}. \\
The extension to the case of variable metric and to preconditioning has been considered in \cite{CheRoc97,optim2}.
This extension is crucial, since preconditioned forward-backward splitting 
can be used to solve a broad class of structured composite monotone inclusion problems
in duality, by formulating them as instances of the above fundamental monotone inclusion in product Hilbert spaces.
Indeed,  within this framework it is possible  to recover several primal-dual splitting methods proposed in the literature, see \cite{optim2,Con13,icip14} for details. 
This basic procedure has been extended by using the product space reformulation technique to solve coupled  systems of monotone inclusions in \cite{plc2010} and then in \cite{jota1}. \\
Inspired by the accelerated gradient method of Nesterov \cite{Nes83}, inertial variants of  forward-backward splitting for solving monotone inclusions 
have been introduced in \cite{Dirk} (see also \cite{Polyak64,Attouch01,Moudafi03,JC10}).
In particular, \cite{Dirk} discusses the derivation of inertial primal-dual algorithms from the inertial forward-backward algorithm applied to suitable monotone inclusions in duality. 

The goal of the paper is to extend this analysis to the stochastic setting. Recently, stochastic versions of splitting methods for monotone inclusions, such as  stochastic
forward-backward splitting \cite{plc14,LSB14b},  stochastic Douglas-Rachford \cite{plc14}, and 
stochastic versions of primal-dual methods as in \cite{BiaHacIut14,plc14,pesquet14} have been proposed. These works have found 
applications to stochastic optimization \cite{plc14,LSB14b} and machine learning  \cite{Duch09,LSB14a}. In this paper, we  propose and study a stochastic inertial forward-backward splitting algorithm for  solving the following  monotone inclusion.

\begin{problem}
\label{prob1}
Let $\beta\in\RPP$,
let $\HH$ be a real Hilbert space. 
Let $U \in \BL(\HH)$ be self-adjoint and  strongly positive,
 let $A\colon \HH\to 2^{\HH}$ be maximally monotone, 
let $B\colon \HH\to\HH$ be such that
 for every  $(x,y)\in\HH^2$,
\begin{alignat}{2}
\label{coco} 
 \scal{x-y}{Bx - By} \geq \beta 
\scal{Bx - By }{U(Bx - By)}.
\end{alignat}
Suppose that the set $\mathcal{P}$ of  all points
$\overline{x}\in\HH$  such that
 \begin{equation}
\label{e:pri}
0\in A\overline{x} 
+ B\overline{x} 
\end{equation}
is non-empty. 
The problem is to find a point in $\mathcal{P}$.
\end{problem}

We  show that the above  inclusion includes as special cases coupled systems of
monotone inclusions, arising in the study of evolution inclusions, variational problems, 
best approximation, and network flows. We refer the reader to \cite{plc2010} for a discussion of several applications. 
Our main result establishes almost sure convergence of the iterates of the considered algorithm. 
Such a result builds on ideas introduced  in \cite{optim2} and \cite{icip14}. As 
a corollary it allows to derive, as special  cases, two new classes of stochastic inertial primal-dual splitting methods for solving coupled system  of composite monotone inclusions involving parallel sums.

The rest of the paper is organized as follows.
We recall some notation and background 
on monotone operator theory in Section \ref{backgr}. 
Then, in Section \ref{Algocon}, we define the stochastic inertial forward-backward splitting algorithm solving 
Problem \ref{prob1} and analyze its convergence. 
In Section \ref{s:cop}, the application to coupled systems of monotone inclusions in duality, and 
minimization problems is derived, Finally, the derivation of two classes of  stochastic inertial primal-dual splitting methods
is proposed in Section \ref{Algocon}.

\section{Notation--background and preliminary results}
\label{backgr}
Throughout, $\HH$  is a real separable Hilbert space.
We denote by  $\scal{\cdot}{\cdot}$ and $\|\cdot\|$  the scalar product and the associated norm
of $\HH$.  
The symbols $\weakly$ and $\to$ denote weak and strong convergence, respectively. 
We denote  by $\ell_+^1(\NN)$ the set of summable sequences in  $\RP$, and 
by $\BL(\HH)$ the space of linear operators from $\HH$ into itself. 
Let $U\in\BL(\HH)$ be self-adjoint and strongly positive, i.e. 
\begin{equation}
(\exists \chi\in\RPP)(\forall x\in\HH) \quad \scal{Ux}{x}\geq \chi\|x\|^2.
\end{equation} 
We define a scalar product and a norm respectively by
\[(\forall x\in\HH)(\forall y\in\HH)\quad\scal{x}{y}_U=\scal{Ux}{y}
\quad\text{and}\quad\|x\|_U=\sqrt{\scal{Ux}{x}}.
\]
Let $A\colon\HH \to 2^\HH$ be a set-valued operator.
The domain and the graph of $A$ are defined by  
$$\dom A =\menge{x\in\HH}{Ax\neq\emp}\;\text{and}\;
\gra A =\menge{(x,u)\in\HH\times\HH}{u\in Ax}.$$
The set of zeros of $A$ is $\zer A=\menge{x\in\HH}{0\in Ax}$
and the range of $A$ is  $\ran A=A(\HH)$.
The inverse of $A$  is $A^{-1}\colon\HH \to 2^\HH\colon u\mapsto 
\menge{x\in\HH}{u\in Ax}$. The resolvent of $A$ is 
\begin{equation}
\label{eq:res}
J_A=(\Id+A)^{-1},
\end{equation}
where $\Id$ denotes the identity operator of $\HH$. Moreover, 
$A$ is monotone if
\[
(\forall (x,u)\in\gra A) (\forall (y,v)\in\gra A)\quad \scal{x-y}{u-v} \geq 0, 
\]
and maximally so, if there exists no monotone operator $\widetilde{A}\colon\HH\to\HH$
such that $\gra A\subset \gra \widetilde{A}\neq \gra A$.
Let $T\colon\HH\to\HH$. Then $T$ is firmly nonexpansive if 
\begin{equation}
(\forall (x,y)\in\HH^2)\quad \|Tx-Ty\|^2\leq \|x-y\|^2-\|(\Id-T)x-(\Id-T)y\|^2.
\end{equation}
If  $A$ is monotone, then $J_A$ is single-valued and firmly nonexpansive, and, 
in addition, if $A$ is maximally monotone, then $\dom J_A=\HH$.
The parallel sum of $A\colon\HH\to 2^\HH$ and 
$B\colon\HH \to 2^\HH$ is
\[
 A\;\vuo\; B = (A^{-1}+ B^{-1})^{-1}.
\]
$A$ is demiregular at $y\in\dom A$ if, for every sequence $(y_n,v_n)_{n\in\NN}$ in $\gra A$ and every 
$v\in Ay$, we have $(y_n\rightharpoonup y,\, v_n\to v) \implies y_n\to y$.

 Let $\Gamma_0(\HH)$ be the class of proper lower semicontinuous convex functions from $\HH$ to
$\left]-\infty,+\infty\right]$. For any self-adjoint strongly positive operator
$U\in \BL(\HH)$ and $f\in\Gamma_0(\HH)$, we define
 \begin{equation}
\label{set0}
\prox_{f}^{U}\colon\HH\to\HH \colon x\mapsto\underset{y\in\HH}{\argmin}\: 
\big( f(y) + \frac{1}{2}\|x-y\|_{U}^2\big),
\end{equation}
and 
 \[
\prox_{f}\colon\HH\to\HH \colon x\mapsto\underset{y\in\HH}{\argmin}\: 
\big( f(y) + \frac{1}{2}\|x-y\|^2\big).
\]
It holds $\prox_{f}^U=J_{U^{-1}\partial f}$, and $\prox_{f}= J_{\partial f}$
coincides with the classical definition of proximity operator in \cite{Mor62}.
The  conjugate function of $f$ is 
\[
 f^*\colon a\mapsto \sup_{x\in\HH}\big(\scal{a}{x}-f(x)\big).
\]
Note that, 
\[
 (\forall f\in\Gamma_0(\HH))(\forall x\in\HH)(\forall y\in\HH)\quad y\in\partial f(x) \Leftrightarrow x\in\partial f^*(y),
\]
or equivalently,
\begin{equation}
\label{e:equi}
 (\forall f\in\Gamma_0(\HH))\quad (\partial f)^{-1} = \partial f^*.
\end{equation}
The infimal convolution of the two functions $f$ and $g$ from $\HH$ to $\left]-\infty,+\infty\right]$ is 
\[
 f\;\vuo\; g\colon x \mapsto \inf_{y\in\HH}(f(y)+g(x-y)).
\]
 The strong relative interior of a subset $C$ of $\HH$ is 
the set of points $x\in C$ such that the cone generated by 
$-x + C$ is a closed vector subspace of $\HH$. We refer to \cite{livre1} for an account of the main 
results of convex analysis, monotone operator theory, and the theory of nonexpansive operators in the context 
of Hilbert spaces.

Let $(\Omega, \FF,\mathsf{P})$ be a probability space.
A $\HH$-valued random variable is a measurable function $X\colon \Omega\to\HH$, 
where $\HH$ is endowed with the Borel $\sigma$-algebra. We denote by 
$\sigma(X)$ the  $\sigma$-field generated by $X$.
The expectation of a random variable $X$ is denoted by $\E[X]$. The conditional expectation
of $X$ given a $\sigma$-field $\EuScript{A}\subset \EuScript{F}$ is denoted by 
$\E[X|\EuScript{A}]$. Given a random variable $Y\colon\Omega\to\HH$,
the conditional expectation of $X$ given $Y$, that is $\E[X|\sigma(Y)]$ is denoted by $\E[X|Y]$. See \cite{LedTal91} for more
details on probability theory in Hilbert spaces.  A $\HH$-valued random process is a sequence $(x_n)_{n\in\NN}$ 
of $\HH$-valued random variables.
The abbreviation a.s. stands for ``almost surely''.
\begin{lemma}{\rm\cite[Theorem 1]{Rob85}}\label{l:rob85}
Let $(\FF_n)_{n\in\NN}$ be an increasing sequence of  sub-$\sigma$-algebras of ${\EuScript{F}}$,
let $(z_n)_{n\in\NN}$, $(\xi_n)_{n\in\NN}$, $(\zeta_n)_{n\in\NN}$ and $(t_n)_{n\in\NN}$ be 
$\RP$-valued random sequences such that,  for every $n\in\NN$, $z_n$, $\xi_n$, $\zeta_n$,
and $t_n$  are $\FF_n$-measurable. Assume moreover that  $\sum_{n\in\NN}t_n<+\infty$,
$\sum_{n\in\NN} \zeta_n<+\infty$ a.s.,  and 
\begin{equation}\label{eq:rob}
(\forall n\in\NN)\quad \E[z_{n+1}|\FF_n] \leq (1+t_n)z_n + \zeta_n-\xi_n 
\quad \text{a.s.}.
\end{equation}
Then $(z_n)_{n\in\NN}$ converges a.s. and $(\xi_n)_{n\in\NN}$ is summable a.s..
\end{lemma}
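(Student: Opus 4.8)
The plan is to deduce both conclusions from the classical convergence theorem for nonnegative supermartingales, after two preparatory reductions. First I would strip off the multiplicative factor $1+t_n$ by a rescaling adapted to the filtration. Since $\sum_n t_n<+\infty$, the products $\theta_n=\prod_{k=0}^{n-1}(1+t_k)$ (with $\theta_0=1$) increase to a finite limit $\theta_\infty\in[1,+\infty[$, and each $\theta_{n+1}$ is $\FF_n$-measurable because $t_n$ is. Dividing \eqref{eq:rob} by $\theta_{n+1}=(1+t_n)\theta_n$ and pulling the $\FF_n$-measurable factor $\theta_{n+1}$ out of the conditional expectation gives
\begin{equation*}
\E\Big[\tfrac{z_{n+1}}{\theta_{n+1}}\,\Big|\,\FF_n\Big]\le \tfrac{z_n}{\theta_n}+\tfrac{\zeta_n}{\theta_{n+1}}-\tfrac{\xi_n}{\theta_{n+1}}.
\end{equation*}
Setting $z_n'=z_n/\theta_n$, $\zeta_n'=\zeta_n/\theta_{n+1}$, $\xi_n'=\xi_n/\theta_{n+1}$ produces $\FF_n$-measurable nonnegative sequences obeying the same recursion with $t_n\equiv 0$; because $1\le\theta_n\le\theta_\infty<+\infty$, one has $\sum_n\zeta_n'\le\sum_n\zeta_n<+\infty$ a.s., and $z_n$ converges (resp.\ $\xi_n$ is summable) if and only if $z_n'$ converges (resp.\ $\xi_n'$ is summable). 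Hence it suffices to treat the case $t_n\equiv 0$.

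Assuming $t_n\equiv 0$, I would compensate the perturbation. Write $\sigma_n=\sum_{k=0}^{n-1}\zeta_k$, so that $\sigma_{n+1}$ is $\FF_n$-measurable and $\sigma_n$ increases a.s.\ to the finite limit $\sigma_\infty=\sum_k\zeta_k$, and set $W_n=z_n-\sigma_n$. Using $\sigma_{n+1}=\sigma_n+\zeta_n$ and the measurability of $\sigma_{n+1}$, the recursion yields
\begin{equation*}
\E[W_{n+1}\,|\,\FF_n]=\E[z_{n+1}\,|\,\FF_n]-\sigma_{n+1}\le z_n+\zeta_n-\xi_n-\sigma_{n+1}=W_n-\xi_n\le W_n,
\end{equation*}
so $(W_n)_{n\in\NN}$ is a supermartingale, bounded below by $-\sigma_\infty$, a quantity that is finite only almost surely.

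The remaining and most delicate point is that the hypotheses furnish neither a uniform $L^1$ bound on $(z_n)$ nor sure (as opposed to almost sure) summability of the perturbations, so Doob's theorem cannot be applied to $(W_n)$ directly. I would resolve this by localization: for each $m\in\NN$ let $\nu_m=\inf\{n\in\NN\;|\;\sigma_{n+1}>m\}$, a stopping time for $(\FF_n)_{n\in\NN}$ since $\{\nu_m\ge n\}=\{\sigma_n\le m\}\in\FF_{n-1}$, and consider the stopped process. On $\{n\le\nu_m\}$ one has $\sigma_n\le m$, so $W_{n\wedge\nu_m}+m\ge z_{n\wedge\nu_m}\ge 0$ defines a nonnegative supermartingale that is now integrable, and the nonnegative supermartingale convergence theorem gives a.s.\ convergence of $W_{n\wedge\nu_m}$. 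Because $\sum_k\zeta_k<+\infty$ a.s., the events $\{\nu_m=+\infty\}=\{\sigma_\infty\le m\}$ exhaust $\Omega$ up to a null set as $m\to\infty$, whence $W_n$ converges a.s.\ to a finite limit; adding back $\sigma_n\to\sigma_\infty$ shows that $z_n$ converges a.s. Finally, taking expectations in $\E[W_{n+1}\,|\,\FF_n]\le W_n-\xi_n$ along the same stopping and telescoping gives $\sum_{k=0}^{N}\E[\xi_k\mathbf{1}_{\{\nu_m=+\infty\}}]\le\E[z_0\mathbf{1}_{\{\nu_m=+\infty\}}]+m$, so by monotone convergence $\sum_k\xi_k<+\infty$ on $\{\sigma_\infty\le m\}$, and letting $m\to\infty$ yields $\sum_k\xi_k<+\infty$ a.s. The main obstacle is exactly this passage from the merely almost-sure control of $(\zeta_n)$ and $(t_n)$ to a genuine $L^1$-bounded nonnegative supermartingale; once the stopping-time truncation is in place, the two conclusions follow from classical martingale theory.
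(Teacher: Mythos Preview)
The paper does not supply its own proof of this lemma: it is quoted verbatim from Robbins and Siegmund \cite[Theorem~1]{Rob85} and used as a black box. Your proposal is in fact the classical argument behind that theorem---rescale by $\theta_n=\prod_{k<n}(1+t_k)$ to absorb the multiplicative factor, compensate by the partial sums $\sigma_n=\sum_{k<n}\zeta_k$ to obtain a supermartingale, and localize with stopping times to recover integrability. So there is nothing to compare: your route and the (cited) original coincide in structure.

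One small correction. In the final telescoping step you write
\[
\sum_{k=0}^{N}\E\bigl[\xi_k\mathbf{1}_{\{\nu_m=+\infty\}}\bigr]\le\E\bigl[z_0\mathbf{1}_{\{\nu_m=+\infty\}}\bigr]+m,
\]
but the event $\{\nu_m=+\infty\}$ is not $\FF_0$-measurable, so you cannot restrict the initial term to it. What the stopped supermartingale inequality actually yields is $\sum_{k=0}^{N}\E[\xi_k\mathbf{1}_{\{k<\nu_m\}}]\le \E[z_0]+m$, and then you use $\{\nu_m=+\infty\}\subset\{k<\nu_m\}$ for every $k$. This still requires $\E[z_0]<+\infty$, which is not among the hypotheses; if $z_0$ is merely a nonnegative random variable, one introduces a second localization (e.g.\ truncate on $\{z_0\le M\}$, or equivalently use the stopping time $\inf\{n:z_n>M\}$ as in the original Robbins--Siegmund paper) and lets $M\to\infty$. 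With that adjustment the argument is complete.
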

The following lemma is a special case of \cite[Proposition 2.3]{plc14}.
\begin{lemma}
\label{p:fejer}
 Let $C$ be a non-empty closed subset of $\HH$ and
let  $(x_n)_{n\in\NN}$ be a  $\HH$-valued random process.
For every $n\in\mathbb{N}$, set $\EuScript{F}_n=\sigma(x_0,\ldots,x_n)$. 
Suppose that, for every $x\in C$, there exist $\RP$-valued random 
 sequences $(\xi_n(x))_{n\in\NN}$, $(\zeta_n(x))_{n\in\NN}$ and $(t_n(x))_{n\in\NN}$
such that,  for every $n\in\NN$, $\xi_n(x)$, $\zeta_n(x)$ and $t_n(x)$
are ${\EuScript{F}}_n$-measurable, $(\zeta_n(x))_{n\in\NN}$ and $(t_n(x))_{n\in\NN}$ are summable a.s.,
and
\begin{equation}\label{eq:rob}
(\forall n\in\NN)\quad \E[\|x_{n+1}-x\|^2|\FF_n] \leq (1+t_n(x))\|x_{n}-x\|^2 + \zeta_n(x)-\xi_n(x)
\quad \text{a.s.}
\end{equation}
Then the following hold.
\begin{enumerate}
\item 
\label{p:fejerii} $(x_n)_{n\in\NN}$ is bounded a.s.
\item 
\label{p:fejeri}  There exists $\widetilde{\Omega}\subset \Omega$ such that $\PPP(\widetilde{\Omega})=1$ 
and, for every $\omega\in\widetilde{\Omega}$ and $x\in C$,
 $( \|x_{n}(\omega)-x\|)_{n\in\NN}$ converges a.s.
\item 
\label{p:fejeriv}
Suppose that the set of weak cluster points of $(x_n)_{n\in\NN}$ is a subset of $C$ a.s. Then
$(x_n)_{n\in\NN}$ converges weakly a.s. to a $C$-valued random vector.
\end{enumerate}
\end{lemma}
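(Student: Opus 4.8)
The plan is to deduce all three assertions from the Robbins--Siegmund Lemma~\ref{l:rob85}, the idea being first to cast, for each fixed target point, the quasi-Fej\'er recursion as the scalar supermartingale inequality covered by that lemma, and then to upgrade the resulting ``for each fixed $x$'' conclusions into the uniform statements \ref{p:fejeri} and \ref{p:fejeriv} by exploiting the separability of $\HH$.

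First I would fix $x\in C$ and apply Lemma~\ref{l:rob85} with $z_n=\norm{x_n-x}^2$, $\xi_n=\xi_n(x)$, $\zeta_n=\zeta_n(x)$, and $t_n=t_n(x)$: these are $\FF_n$-measurable, the sequences $(\zeta_n(x))_{n\in\NN}$ and $(t_n(x))_{n\in\NN}$ are a.s.\ summable, and the assumed recursion is precisely the hypothesis of that lemma. I conclude that on a full-measure event the sequence $(\norm{x_n-x}^2)_{n\in\NN}$, hence also $(\norm{x_n-x})_{n\in\NN}$, converges, and that $(\xi_n(x))_{n\in\NN}$ is a.s.\ summable. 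Since a convergent real sequence is bounded, this immediately gives \ref{p:fejerii}.

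The delicate point is \ref{p:fejeri}: the full-measure event just produced depends on $x$, and $C$ may be uncountable, so one cannot naively intersect the events over all of $C$. Here I would use separability. As $\HH$ is separable so is $C$; let $D$ be a countable dense subset of $C$. Intersecting the countably many full-measure events obtained above as $x$ ranges over $D$ yields a single $\widetilde\Omega$ with $\PPP(\widetilde\Omega)=1$ on which $(\norm{x_n(\omega)-x})_{n\in\NN}$ converges for every $x\in D$. To pass to arbitrary $x\in C$ I would invoke the $1$-Lipschitz dependence $\big|\norm{x_n-x}-\norm{x_n-y}\big|\le\norm{x-y}$: for $\omega\in\widetilde\Omega$ and $x\in C$, choosing $y\in D$ with $\norm{x-y}$ small forces $\limsup_n\norm{x_n(\omega)-x}-\liminf_n\norm{x_n(\omega)-x}\le 2\norm{x-y}$, and letting $\norm{x-y}\to 0$ shows the limit exists. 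This separability reduction is the main obstacle of the whole argument.

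Finally, for \ref{p:fejeriv} I would reason pointwise on the intersection of $\widetilde\Omega$ with the a.s.\ event that every weak cluster point of $(x_n)_{n\in\NN}$ lies in $C$. By \ref{p:fejerii} the bounded sequence $(x_n(\omega))_{n\in\NN}$ has weak cluster points; if $x_{k_n}\weakly u$ and $x_{l_n}\weakly v$ are two of them, then $u,v\in C$, so by \ref{p:fejeri} both $\norm{x_n-u}$ and $\norm{x_n-v}$ converge. Expanding $\norm{x_n-u}^2-\norm{x_n-v}^2=2\scal{x_n}{v-u}+\norm{u}^2-\norm{v}^2$ shows that $\scal{x_n}{v-u}$ converges, and evaluating its limit along the two subsequences gives $\scal{u}{v-u}=\scal{v}{v-u}$, whence $\norm{u-v}^2=0$ and $u=v$. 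Thus the weak cluster point is unique and $(x_n)_{n\in\NN}$ converges weakly a.s.; measurability of the limit follows since a pointwise weak limit of $\HH$-valued random variables is again an $\HH$-valued random variable in the separable setting. Beyond the separability step in \ref{p:fejeri}, I expect the only remaining subtlety to be this measurability of the weak limit.
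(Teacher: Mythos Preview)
Your proof proposal is correct. The paper, however, does not prove this lemma at all: it simply records it as ``a special case of \cite[Proposition~2.3]{plc14}'' (Combettes--Pesquet). So there is no in-paper proof to compare against; your sketch supplies exactly the argument that the cited reference carries out, namely Robbins--Siegmund for each fixed target point, a separability/density reduction to get the single full-measure set in \ref{p:fejeri}, and the standard Opial-type uniqueness argument for \ref{p:fejeriv}. The only point worth flagging is that Lemma~\ref{l:rob85} as stated in the paper asks for $\sum_n t_n<+\infty$ deterministically while Lemma~\ref{p:fejer} only assumes this a.s.; the Robbins--Siegmund theorem of course covers the a.s.\ version, so this is a harmless mismatch in the paper's transcription rather than a gap in your argument.
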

\begin{lemma}{\rm \cite[Lemma 3.7]{optim2}}
\label{l:maxmon45}
Let $A\colon\HH \to 2^{\HH}$ be maximally monotone, 
 let $U\in\BL(\HH)$ be self-adjoint and strongly positive, and let 
$\GG$ be the real Hilbert space obtained by endowing $\HH$ with 
the scalar product 
$(x,y)\mapsto\scal{x}{y}_{U^{-1}}=\scal{x}{U^{-1}y}$.
Then, the following hold.
\begin{enumerate}
\item
\label{l:maxmon45i}
$UA\colon\GG \to 2^{\GG}$ is maximally monotone.
\item
\label{l:maxmon45ii}
$J_{UA}\colon\GG\to\GG$ is firmly nonexpansive.
\end{enumerate}
\end{lemma}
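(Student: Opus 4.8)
The plan is to establish (i) first---monotonicity by a one-line computation and maximality via Minty's theorem---and then to obtain (ii) immediately from (i) together with the general fact, recalled earlier in the paper, that the resolvent of a maximally monotone operator is single-valued, firmly nonexpansive, and everywhere defined. The whole point is that $\GG$ is a genuine real Hilbert space, so every monotone-operator fact stated for an abstract Hilbert space applies to $UA$ acting on $\GG$, once the monotonicity and maximality of $UA$ in the $U^{-1}$-metric are in place.

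For monotonicity I would take $(x,p)$ and $(y,q)$ in $\gra(UA)$, viewed inside $\GG\times\GG$, so that $U^{-1}p\in Ax$ and $U^{-1}q\in Ay$. Then
\[
\scal{x-y}{p-q}_{U^{-1}}=\scal{x-y}{U^{-1}(p-q)}=\scal{x-y}{U^{-1}p-U^{-1}q}\geq 0,
\]
the last inequality being exactly the monotonicity of $A$ in $\HH$ applied to $(x,U^{-1}p)$ and $(y,U^{-1}q)$ in $\gra A$. Hence $UA$ is monotone on $\GG$. For maximality I would invoke Minty's characterization: a monotone operator $M$ on a real Hilbert space is maximally monotone if and only if $\ran(\Id+M)$ is the whole space. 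Since the identity of $\GG$ is, as a set map, the same as $\Id$, it suffices to show $\ran(\Id+UA)=\HH$. Given $z\in\HH$, the inclusion $z\in x+UAx$ is equivalent, after applying $U^{-1}$, to $U^{-1}z\in(U^{-1}+A)x$. Now $U^{-1}\in\BL(\HH)$ is self-adjoint, strongly positive, hence strongly monotone and defined on all of $\HH$; by the sum theorem $A+U^{-1}$ is maximally monotone, and being strongly monotone it is surjective, so $\ran(U^{-1}+A)=\HH$. Thus for every $z$ a solution $x$ exists, giving $\ran(\Id+UA)=\HH$, and Minty's theorem yields maximal monotonicity of $UA$ on $\GG$, which proves (i).

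Statement (ii) then follows at once: applying in the Hilbert space $\GG$ the general fact recalled above, the resolvent of the maximally monotone operator $UA$ is single-valued and firmly nonexpansive with respect to $\|\cdot\|_{U^{-1}}$, and its domain is all of $\GG=\HH$. Everything except maximality in (i) is a direct transcription of standard monotone-operator facts into the renormed space $\GG$, so the main obstacle is precisely the surjectivity of $\Id+UA$ on $\GG$; I reduce it, as above, to the surjectivity of the strongly monotone operator $A+U^{-1}$ on $\HH$, which is where the strong positivity of $U$ is essential.
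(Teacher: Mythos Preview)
Your proof is correct. Note, however, that the paper does not give its own proof of this lemma: it is stated with a citation to \cite[Lemma~3.7]{optim2} and used as a black box, so there is no in-paper argument to compare against. Your route---monotonicity by the one-line identity $\scal{x-y}{p-q}_{U^{-1}}=\scal{x-y}{U^{-1}p-U^{-1}q}$, maximality via Minty by reducing $\ran(\Id+UA)=\HH$ to surjectivity of the strongly monotone maximally monotone operator $A+U^{-1}$ on $\HH$, and then (ii) as an immediate consequence of (i)---is exactly the standard argument one finds in the cited source and in \cite{livre1}. The only point worth making explicit is that ``strongly monotone $+$ maximally monotone $\Rightarrow$ surjective'' is being invoked; this is standard (e.g.\ \cite[Corollary~25.28]{livre1}), but since it is the crux of the maximality step you might cite it rather than leave it implicit.
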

\section{Main results}
\label{Algocon}
In this section we introduce the stochastic 
inertial forward-backward algorithm for solving Problem \ref{prob1} and 
analyze its convergence behavior. We recall that  $\beta$ is the constant 
defined in \eqref{coco}.
\begin{algorithm}
\label{a:al1}
Let $\varepsilon\in\left]0,\min\{1,\beta\}\right[$,
let $(\gamma_n)_{n\in\NN}$ be a sequence in 
$\left[\varepsilon,(2-\varepsilon)\beta\right]$, let $(\lambda_n)_{n\in\NN}$ be a sequence in  $\left[\varepsilon,1\right]$,
and  let $(\alpha_n)_{n\in\NN}$ be a sequence in  $\left[0,1-\varepsilon\right]$.
Let $(\rr_{n})_{n\in\NN}$  be  a $\HH$-valued, square integrable random process,
let $x_{0}$ be  a $\HH$-valued, squared integrable random variable and set $x_{-1}=x_{0}$. Furthermore,
set 
\begin{equation}
\label{e:main1*}
(\forall n\in \NN)\quad
\begin{array}{l}
\left\lfloor
\begin{array}{l} 
w_{n}=x_{n}+\alpha_n  (x_n-x_{n-1}) \\
z_n = w_n-\gamma_n U \rr_n\\
 p_{n}= J_{\gamma_n UA}(z_{n})\\
x_{n+1}=x_{n}+\lambda_{n} (p_{n}-x_{n}).\\
\end{array} 
\right.\\[2mm]
\end{array}
\end{equation}

\end{algorithm}
\begin{theorem}
\label{t:1} 
Consider Algorithm~\ref{a:al1}, and set $(\forall n\in\NN)\; \FF_n = \sigma(x_{0}, \ldots, x_{n})$.  
Suppose that  the following conditions are satisfied.
\begin{enumerate}
\item\label{cond:one1} 
$ (\forall n\in \NN)\;
\E[\rr_{n}| \FF_n] = Bw_{n}
$ a.s.
\item 
\label{cond:two2} 
$\sum_{n\in\NN}
\E[ \|  r_{n}-Bw_{n}\|^2 | \FF_n] < +\infty$ a.s.
\item\label{cond:three3}
$\sup_{n\in\NN} \| x_{n}-x_{n-1}\|^2 <\infty $ a.s.
and $\sum_{n\in\NN}\alpha_n < +\infty $ a.s.
\end{enumerate}
Then, the following hold for some  a.s. $\mathcal{P}$-valued random variable 
$\overline{x}$.
\begin{enumerate}
\item \label{t:1i}  $x_{n}\weakly \overline{x}$ a.s.
\item\label{t:1ia}
$Bx_{n}\to B\overline{x}$ a.s.
\item\label{t:1ii}
 If $B$ is demiregular at $\overline{x}$, then
$ x_{n}\to \overline{x}$ a.s.
\end{enumerate}
\end{theorem}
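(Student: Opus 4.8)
The plan is to transport the entire iteration into the renormed Hilbert space \GG{} of Lemma~\ref{l:maxmon45} (that is, \HH{} endowed with $\scal{\cdot}{\cdot}_{U^{-1}}$) and to run a stochastic quasi-Fej\'er analysis there. In \GG{} the operator $\tilde B:=UB$ is $\beta$-cocoercive: a direct computation from \eqref{coco} gives $\scal{x-y}{\tilde Bx-\tilde By}_{U^{-1}}=\scal{x-y}{Bx-By}\ge\beta\scal{Bx-By}{U(Bx-By)}=\beta\norm{\tilde Bx-\tilde By}_{U^{-1}}^2$, and by Lemma~\ref{l:maxmon45} each $J_{\gamma_n UA}$ is firmly nonexpansive on \GG{}, with $\mathcal P=\Fix\big(J_{\gamma_n UA}(\Id-\gamma_n\tilde B)\big)$ for every $n$. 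Writing $e_n:=U(\rr_n-Bw_n)$, condition~\ref{cond:one1} reads $\E[e_n\mid\FF_n]=0$ and condition~\ref{cond:two2} yields $\sum_n\chi_n<\infty$ a.s.\ for $\chi_n:=\E[\norm{e_n}_{U^{-1}}^2\mid\FF_n]$; moreover $\norm{w_n-x_n}_{U^{-1}}\le\alpha_n c$ with $c:=\sup_n\norm{x_n-x_{n-1}}_{U^{-1}}<\infty$ a.s.\ and $\sum_n\alpha_n<\infty$ by condition~\ref{cond:three3}.

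Fix $\overline x\in\mathcal P$ and set $d_n:=\tilde Bw_n-\tilde B\overline x$. The core is a one-step estimate from the \emph{strong} form of firm nonexpansiveness, $\norm{p_n-\overline x}_{U^{-1}}^2\le\norm{z_n-\overline z}_{U^{-1}}^2-\norm{s_n}_{U^{-1}}^2$, with $\overline z:=\overline x-\gamma_n\tilde B\overline x$ and $s_n:=(z_n-p_n)-(\overline z-\overline x)=(w_n-p_n)-\gamma_n(d_n+e_n)$. Expanding $\norm{z_n-\overline z}_{U^{-1}}^2$, using cocoercivity to absorb the term in $\scal{w_n-\overline x}{d_n}_{U^{-1}}$, and \emph{crucially} keeping the noise linear through $-2\gamma_n\scal{(w_n-\overline x)-\gamma_n d_n}{e_n}_{U^{-1}}$, whose $\FF_n$-conditional mean vanishes since $w_n,d_n$ are $\FF_n$-measurable and $\E[e_n\mid\FF_n]=0$, I obtain
\begin{equation*}
\E[\norm{p_n-\overline x}_{U^{-1}}^2\mid\FF_n]\le\norm{w_n-\overline x}_{U^{-1}}^2-\gamma_n(2\beta-\gamma_n)\norm{d_n}_{U^{-1}}^2+\gamma_n^2\chi_n-\E[\norm{s_n}_{U^{-1}}^2\mid\FF_n].
\end{equation*}
Inserting the inertial bound $\norm{w_n-\overline x}_{U^{-1}}^2\le(1+\delta_n)\norm{x_n-\overline x}_{U^{-1}}^2+\delta_n+\delta_n^2$ with $\delta_n:=\alpha_n\norm{x_n-x_{n-1}}_{U^{-1}}$ (so $\sum_n\delta_n<\infty$), together with the relaxation identity $\norm{x_{n+1}-\overline x}_{U^{-1}}^2=(1-\lambda_n)\norm{x_n-\overline x}_{U^{-1}}^2+\lambda_n\norm{p_n-\overline x}_{U^{-1}}^2-\lambda_n(1-\lambda_n)\norm{p_n-x_n}_{U^{-1}}^2$, produces a recursion of exactly the form required by Lemma~\ref{p:fejer} and Lemma~\ref{l:rob85}: $t_n=\lambda_n\delta_n$, a summable $\zeta_n$ assembled from $\delta_n,\delta_n^2,\gamma_n^2\chi_n$, and $\xi_n=\lambda_n\gamma_n(2\beta-\gamma_n)\norm{d_n}_{U^{-1}}^2+\lambda_n\E[\norm{s_n}_{U^{-1}}^2\mid\FF_n]\ge0$.

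Lemma~\ref{p:fejer} then gives a.s.\ boundedness of $(x_n)$ and a.s.\ convergence of $\norm{x_n-\overline x}_{U^{-1}}$ for each $\overline x\in\mathcal P$, while Lemma~\ref{l:rob85} gives $\sum_n\xi_n<\infty$ a.s. Since $\gamma_n\in[\varepsilon,(2-\varepsilon)\beta]$ and $\lambda_n\in[\varepsilon,1]$ keep the two coefficients in $\xi_n$ bounded away from $0$, this yields $\sum_n\norm{d_n}_{U^{-1}}^2<\infty$ and $\sum_n\E[\norm{s_n}_{U^{-1}}^2\mid\FF_n]<\infty$ a.s.; passing from summable conditional variances to a.s.\ summable realizations (the martingale argument underlying Lemma~\ref{l:rob85}) gives $\norm{e_n}_{U^{-1}}\to0$, $\norm{d_n}_{U^{-1}}\to0$, $\norm{s_n}_{U^{-1}}\to0$ a.s. The formula for $s_n$ and $\alpha_n\to0$ then force the asymptotic regularity $\norm{p_n-x_n}\to0$ (equivalently $\norm{x_{n+1}-x_n}\to0$). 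To identify the limit, $p_n=J_{\gamma_n UA}(z_n)$ gives $\gamma_n^{-1}U^{-1}(w_n-p_n)-\rr_n\in Ap_n$, hence $\gamma_n^{-1}U^{-1}(w_n-p_n)-U^{-1}e_n+(Bp_n-Bw_n)\in(A+B)p_n$; along any subsequence with $x_{n_k}\weakly\widehat x$ one has $p_{n_k}\weakly\widehat x$, and since $B$ is Lipschitz (a consequence of \eqref{coco} and strong positivity of $U$) and $\gamma_n\ge\varepsilon$, the right-hand element tends to $0$ strongly. Demiclosedness of the graph of the maximally monotone operator $A+B$ forces $\widehat x\in\mathcal P$, and Lemma~\ref{p:fejer}\ref{p:fejeriv} yields \ref{t:1i}. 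For \ref{t:1ia}, $B$ is constant on $\mathcal P$ (monotonicity of $A$ against cocoercivity of $B$), so $\norm{d_n}_{U^{-1}}\to0$ with $w_n-x_n\to0$ and Lipschitzness of $B$ give $Bx_n\to B\overline x$; and \ref{t:1ii} follows from \ref{t:1i}, \ref{t:1ia} and the definition of demiregularity applied to $(x_n,Bx_n)$.

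The main obstacle is the one-step estimate: the naive route of bounding the distance between $p_n$ and the noiseless step $J_{\gamma_n UA}(w_n-\gamma_n\tilde Bw_n)$ and applying Cauchy--Schwarz produces a coefficient $\sqrt{\chi_n}$, which need \emph{not} be summable under condition~\ref{cond:two2}; the fix is precisely to retain $e_n$ linearly inside the inner product so that its conditional mean vanishes and only the summable $\gamma_n^2\chi_n$ survives. The remaining delicate points are the measurability bookkeeping needed to pass from $\sum_n\E[\cdot\mid\FF_n]<\infty$ to a.s.\ summability, and the simultaneous absorption of the inertia into the $(1+t_n)$ factor and the summable perturbation $\zeta_n$, which is exactly what $\sum_n\alpha_n<\infty$ together with $\sup_n\norm{x_n-x_{n-1}}<\infty$ buys.
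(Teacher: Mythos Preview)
Your proof is correct and follows the same stochastic quasi-Fej\'er strategy as the paper: transport to the $U^{-1}$-renormed space, use firm nonexpansiveness of $J_{\gamma_n UA}$ for the one-step bound, exploit $\E[e_n\mid\FF_n]=0$ so the noise survives only through the summable $\gamma_n^2\chi_n$, and conclude via Robbins--Siegmund and Lemma~\ref{p:fejer}.

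The one execution difference worth noting is in the weak-cluster-point step. The paper introduces the \emph{noiseless} iterate $\overline p_n:=J_{\gamma_n UA}(w_n-\gamma_n UBw_n)$, which is $\FF_n$-measurable; then $\|w_n-\overline p_n\|_{U^{-1}}^2=\E[\|w_n-\overline p_n\|_{U^{-1}}^2\mid\FF_n]$ is controlled directly by $\E[\|w_n-p_n\|_{U^{-1}}^2\mid\FF_n]$ and $\E[\|e_n\|_{U^{-1}}^2\mid\FF_n]$, so only convergence of conditional expectations is needed, and the graph-closedness argument is run on $A$. You instead work with $p_n$ itself and the graph of $A+B$, which forces you to upgrade $\sum_n\E[\|e_n\|_{U^{-1}}^2\mid\FF_n]<\infty$ and $\sum_n\E[\|s_n\|_{U^{-1}}^2\mid\FF_n]<\infty$ to $\|e_n\|\to0$ and $\|s_n\|\to0$ a.s.; this is valid (a routine application of Robbins--Siegmund to the partial sums), but it is an extra lemma that the paper's noiseless-iterate trick avoids. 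The remaining differences---your $-\gamma_n(2\beta-\gamma_n)\|d_n\|_{U^{-1}}^2$ versus the paper's $-\gamma_n(2-\gamma_n/\beta)\scal{w_n-x}{Bw_n-Bx}$, and your Cauchy--Schwarz inertial bound versus the paper's three-point identity $\|w_n-x\|^2=(1+\alpha_n)\|x_n-x\|^2-\alpha_n\|x_{n-1}-x\|^2+\alpha_n(1+\alpha_n)\|x_n-x_{n-1}\|^2$---are cosmetic.
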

\begin{proof}
Let $x\in\mathcal{P}$ and set
\begin{equation}
\label{e:set1}(\forall n\in \NN)\quad
u_n = w_n-p_n - \gamma_nU(\rr_n- B x).
\end{equation}
Since
\begin{equation}
(\forall n\in \NN)\quad
x_{n+1} = (1-\lambda_n)  x_n + \lambda_n p_n,
\end{equation}
then, upon setting  $V = U^{-1}$, and using the convexity of  $\|\cdot\|_{V}^{2}$,  we obtain
\begin{equation}
\label{e:sett}(\forall n\in \NN)\quad
\|x_{n+1}-x\|_{V}^2 \leq (1-\lambda_n)  \|x_{n}-x\|_{V}^2
+\lambda_n \|p_{n}-x\|_{V}^2.
\end{equation}
Since $x\in\mathcal{P}$, we have 
\begin{equation}
(\forall n\in \NN)\quad
x = J_{\gamma_n U A}(x - \gamma_nU B x).
\end{equation}
By Lemma \ref{l:maxmon45}(ii), $J_{\gamma_n UA}$ is firmly nonexpansive
with respective to  $\|\cdot \|_{V}$, 
and therefore
\begin{alignat}{2}
\label{e:est1}(\forall n\in \NN)\quad
\|p_n-x \|_{V}^2 &\leq \|w_n- x -\gamma_nU(\rr_n- B x) \|_{V}^2
-  \|u_n \|_{V}^2\notag\\
& = \|w_n- x\|_{V}^2 - 2\gamma_{n}\scal{w_n- x}{\rr_n- B x}\notag\\
&\hspace{3cm}
+ \gamma_{n}^2 \|U(\rr_n- B x) \|_{V}^2 -  \|u_n \|_{V}^2.
\end{alignat}
Using \ref{cond:one1}, since $w_n$ is $\FF_n$-measurable, we have 
\begin{alignat}{2}
\label{e:est2}(\forall n\in \NN)\quad
\E[\scal{w_n- x}{\rr_n- B x}|\FF_n] 
&= 
\scal{w_n- x}{\E[\rr_n|\FF_n]- B x}\notag\\
&= \scal{w_n- x}{Bw_n- B x}.
\end{alignat}
By the same reason, for every $n\in\NN$, $Bw_n$ is $\FF_n$-measurable, and we also have 
\begin{alignat}{2}
\label{e:est3}
\E[\|U(\rr_n- B x) \|_{V}^2|\FF_n] 
&= \E[\|U(\rr_n- B w_n) \|_{V}^2|\FF_n] +\|U(B w_n- B x) \|_{V}^2\notag\\
&\hspace{1.4cm}+ 2\E[\scal{Bw_n- Bx}{\rr_n- Bw_n}|\FF_n ] \notag\\
&= \E[\|U(\rr_n- B w_n) \|_{V}^2|\FF_n] +\|U(B w_n- B x) \|_{V}^2\notag\\
&\hspace{1.4cm}+ 2\scal{Bw_n- Bx}{\E[\rr_n|\FF_n]- Bw_n}\notag\\
&=\E[\|U(\rr_n- B w_n) \|_{V}^2|\FF_n] +\|U(B w_n- B x) \|_{V}^2\notag\\
&\leq \E[\|U(\rr_n- B w_n) \|_{V}^2|\FF_n]  +
\beta^{-1}\scal{w_n- x}{B w_n- B x},
\end{alignat}
where the last inequality follows from \eqref{coco1}.
Therefore, for every $n\in\NN$, we derive from~\eqref{e:est1},~\eqref{e:est2}, and~\eqref{e:est3} that 
\begin{alignat}{2}
\E[\|\pp_n-  x \|_{V}^2|\FF_n]
& \leq  \|w_n- x\|_{V}^2 
- \gamma_n(2-\beta^{-1}\gamma_n)\scal{w_n- x}{B w_n- B x}\notag\\
&\quad+\gamma_{n}^2\E[\|U(\rr_n- B w_n) \|_{V}^2|\FF_n] -\E[\|U_n \|_{V}^2|\FF_n] \notag\\
&\leq \|w_n- x\|_{V}^2 -\varepsilon\gamma_n\scal{w_n- x}{B w_n- B x}\notag\\
&\hspace{0.5cm}+\gamma_{n}^2\E[\|U(\rr_n- B w_n) \|_{\VV}^2|\FF_n] -\E[\|U_n \|_{V}^2|\FF_n]\notag\\
&\leq \|x_n- x\|_{V}^2 +\alpha_n( \|x_n- x\|_{V}^2 - \|x_{n-1}- x\|_{V}^2)
+\zeta_n-\xi_n,
\label{e:cc1}
\end{alignat}
where
\begin{equation}
\label{eq:dddd}
(\forall n\in\NN)\quad
\begin{cases}
\zeta_n = 2\alpha_n \|x_{n-1}-x_n\|_{V}^2 + \gamma_{n}^2\E[\|U(\rr_n- B w_n) \|_{V}^2|\FF_n] \\
\xi_n= \E[\|U_n \|_{V}^2|\FF_n]+ \varepsilon\gamma_n\scal{w_n- x}{Bw_n- B x}.
\end{cases}
\end{equation}
Using  \eqref{e:sett} and \eqref{e:cc1},
we obtain,
\begin{alignat}{2}
\label{e:ccc2}
(\forall n\in\NN)\quad\E[\|x_{n+1}- x\|_{V}^2 |\FF_n] 
&\leq (1-\lambda_n)\|x_n- x\|_{V}^2 + \lambda_n \E[\|p_n- x\|_{V}^2|\FF_n]\notag \\
&\leq (1+\alpha_n)\|x_n- x\|_{V}^2 + \zeta_n - (\alpha_n \|x_{n-1}- x\|_{V}^2 +\xi_n).
\end{alignat} 
By~\eqref{eq:dddd} and since $B$ is monotone, for each $n\in\NN$,  $\zeta_n$ and $\xi_n$ are non-negative and 
$\FF_n$-measurable. By (b1) and (c1), $(\zeta_n)_{n\in\NN}$ is summable, and
hence, we derive from Lemma \ref{l:rob85} that 
\begin{equation}\label{eq:aa}
\exists\; \tau =  \lim_{n\to\infty} \|x_{n}- x\|_{V}^2 \quad 
\text{and}
\quad \sum_{n\in\NN}(\alpha_n \|x_{n-1}- x\|_{V}^2 +\xi_n) < +\infty.
\end{equation}
Moreover,
since $\inf_{n\in\NN}\gamma_n\geq\epsilon>0$, we also have
\begin{equation}\label{eq:co1}
\sum_{n\in\NN}\scal{w_n- x}{Bw_n- B x} < +\infty
\Longrightarrow \scal{w_n- x}{Bw_n- B x} \to 0.
\end{equation} 
and 
\begin{equation}\label{eq:co2}
\sum_{n\in\NN} \E[\|U_n \|^2|\FF_n]< +\infty 
\Longrightarrow\E[\| w_n-p_n - \gamma_nU(\rr_n- B x)\|^2|\FF_n] \to 0.
\end{equation} 
Next, using~\eqref{coco1}, we derive from~\eqref{eq:co1} that 
\begin{equation}\label{eq:co3}
 Bw_n \to B x.
\end{equation} 
We also derive from~\eqref{eq:co2},  (b1), and~\eqref{eq:co3}  that
\begin{alignat}{2}\label{eq:co4}
\E[\| w_n-p_n\|^2|\FF_n] &\leq 2 \E[\| w_n-p_n - \gamma_nU(\rr_n- B x)\|^2|\FF_n] 
+ 2\E[\| \gamma_nU(\rr_n- B x)\|^2|\FF_n] \notag\\
&\leq2\bigg(\E[\| w_n-p_n - \gamma_nU(\rr_n- B x)\|^2|\FF_n] \bigg) + 
4\E[\| \gamma_nU(\rr_n- B w_n)\|^2|\FF_n] \notag\\
&\quad+4 \| \gamma_nU(Bw_n- B x)\|^2\to 0.
\end{alignat} 
Hence, since $\inf_{n\in\NN}\gamma_n>0$, we obtain 
\begin{equation}\label{eq:co6}
 \E[\|\rr_n-Bx \|^2|\FF_n] \to 0.
\end{equation}
Now define 
\begin{equation}
(\forall n\in\NN)\quad\overline{p}_n = J_{\gamma_nA}(w_n-\gamma_nUBw_n).
\end{equation}
Then $\overline{\pp}_n$ is $\FF_n$-measurable 
since $ J_{\gamma_n\AAA}\circ(\Id-\gamma_nUB)$ is continuous.
Therefore, by~\eqref{eq:co4} and (b1)
\begin{alignat}{2}
\label{e:ff1}
(\forall n\in\NN)\quad\|w_n-\overline{p}_n \|_{V}^2
&= \E[\|w_n-\overline{p}_n \|_{V}^2|\FF_n]\notag\\
&\leq 2\E[\|p_n-w_n \|_{V}^2|\FF_n]+2\E[\|\gamma_nU(\rr_n-Bw_n) \|_{V}^2|\FF_n] \to 0.
\end{alignat}

\ref{t:1i}:
Let $\omega\in{\Omega}$, and let $\overline{z}\in\mathcal{P}$ be a weak cluster point of $(x_n(\omega))_{n\in\NN}$.
Then, there exists a subsequence $(x_{k_n}(\omega))_{n\in\NN}$ which converges weakly to $\overline{z}$. It follows from our assumption that
$ (w_{k_n}(\omega))_{n\in\NN}$ converges weakly to $\overline{z}$.
 By \eqref{e:ff1}, $(\overline{p}_{k_n}(\omega))_{n\in\NN}$ converges weakly to $\overline{z}$. On the other hand, since 
$B$  is maximally monotone  and its graph is therefore 
sequentially closed in $\HH^{\text{weak}}\times\HH^{\text{strong}}$
\cite[Proposition~20.33(ii)]{livre1}, by \eqref{eq:co3},
$Bx = B\overline{z}$. 
\eqref{eq:rewr}, we have
\begin{equation}\label{eq:co7}
\frac{U^{-1}(w_{k_n}(\omega)-\overline{p}_{k_n}(\omega))}{\gamma_{k_n}}-Bw_{k_n}(\omega) \in Ap_{k_n}(\omega),
\end{equation}
and hence using the sequential closedness of $\gra A$ in
$\HH^{\text{weak}}\times\HH^{\text{strong}}$
\cite[Proposition~20.33(ii)]{livre1},
we get $-B\overline{z} \in A\overline{z}$ or equivalently, 
$\overline{z}\in\zer(A+B)=\mathcal{P}$. Therefore, for every $\omega\in\widehat{\Omega}$, 
every weak cluster point of $(x_n(\omega))_{n\in\NN}$
is in $\mathcal{P}$ which is a non-empty closed convex \cite[Proposition~23.39]{livre1}. 
Recalling~\eqref{e:ccc2} and applying Lemma \ref{p:fejer}\ref{p:fejeriv}, we derive that 
$(x_{n})_{n\in\NN}$ converges weakly to a  $\mathcal{P}$-valued random variable $\overline{x}$.

\ref{t:1ia}: Since $U$  is strongly positive, there exists a positive constant $\chi$ such that 
$(\forall y\in\HH)\; \scal{y}{Uy}\geq\chi \|y\|^2$. Therefore, we derive from 
\eqref{coco1} that 
\begin{equation}
(\forall z\in\HH)(\forall y\in\HH)\quad \| Bz-By\| \leq (\beta\chi)^{-1}\|z-y\|,
\end{equation}
which implies that 
\begin{alignat}{2}
(\forall n\in\NN)\quad \| Bx_n-Bw_n\| &\leq (\beta\chi)^{-1}\|x_n-w_n\|\notag\\
&= (\beta\chi)^{-1}\alpha_n\|x_n-x_{n-1}\|
&\to 0 \quad \text{by (c1)}.
\end{alignat}
Now, using \eqref{eq:co3}, we obtain $Bx_n\to B\overline{x}$.

\ref{t:1ii}: This conclusion follows from the definition of demiregular operator
and \ref{t:1ia}.
\end{proof}

\begin{corollary} 
Let $K$ be a strictly positive integer, let $\beta\in\RPP$,
let $\HH_1,\ldots,\HH_{K} $ be real Hilbert spaces. 
For every $i\in\{ 1,\ldots,K \}$, let $U_i \in \BL(\HH_i)$ be self-adjoint and  strongly positive,
let $A_i\colon \HH_i\to 2^{\HH_i }$ be maximally monotone, 
let $B_i\colon \HH_1\times\ldots\times\HH_{K}\to\HH_i$ such that
for every  
$\xx=(x_i)_{1\leq i\leq K}$ and $\yy= (y_i)_{1\leq i\leq K}$ in $ \HH_1\times\ldots\times\HH_{K}$,
\begin{alignat}{2}
\label{coco} 
\sum_{i=1}^{K} \scal{x_i-y_i}{B_i\xx - B_i\yy} \geq \beta 
\sum_{i=1}^{K} \scal{B_i\xx - B_i\yy }{U_i(B_i\xx - B_i\yy )}.
\end{alignat}
Suppose that the set $\mathcal{P}$ of  all points
$\overline{\xx} =(\overline{x}_1,\ldots, \overline{x}_{K}) $ in 
$\HH_1\times\ldots\times\HH_{K}$  such that
 \begin{equation}
\label{e:pris}
  \begin{cases}
0\in A_1\overline{x}_1 
+ B_1\overline{\xx} \\
  \vdots\quad  \\
0\in A_{K}\overline{x}_{K}
 + B_{K}\overline{\xx}
\end{cases}
\end{equation}
is non-empty. 
Let $\varepsilon\in\left]0,\min\{1,\beta\}\right[$,
let $(\gamma_n)_{n\in\NN}$ be a sequence in 
$\left[\varepsilon,(2-\varepsilon)\beta\right]$, let $(\lambda_n)_{n\in\NN}$ be a sequence in  $\left[\varepsilon,1\right]$,
and  let $(\alpha_n)_{n\in\NN}$ be a sequence in  $\left[0,1-\varepsilon\right]$.
For every $i\in\{ 1,\ldots,K \}$,
let $(r_{i,n})_{n\in\NN}$  be  a $\HH_i$-valued, square integrable random process,
let $x_{i,0}$ be  a $\HH_i$-valued, squared integrable random variable and set $x_{i,-1}=x_{i,0}$. Furthermore,
set 
\begin{equation}
\label{e:main1s}
(\forall n\in \NN)\quad
\begin{array}{l}
\left\lfloor
\begin{array}{l} 
\operatorname{For}\;i=1,\ldots, K\\
w_{i,n} = x_{i,n} + \alpha_n(x_{i,n}-x_{i,n-1})\\
z_{i,n}=w_{i,n}-  \gamma_n U_i r_{i,n} \\
 p_{i,n}= J_{\gamma_n U_{i} A_{i}}(z_{i,n})\\
 x_{i,n+1}=x_{i,n}+\lambda_{n}(p_{i,n}-x_{i,n}).\\
\end{array} 
\right.\\[2mm]
\end{array}
\end{equation}
Then Problem~\ref{e:pris} and Algorithm~\ref{e:main1s}  are special cases of Problem~\ref{e:pri} and Algorithm~\ref{e:main1s} respectively. 
\end{corollary}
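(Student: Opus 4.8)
The plan is to recast the coupled system as a single inclusion of the form treated in Problem~\ref{prob1}, by passing to the product Hilbert space. I would set $\HHH=\HH_1\times\cdots\times\HH_K$ equipped with the canonical scalar product $\scal{\xx}{\yy}=\sum_{i=1}^K\scal{x_i}{y_i}$, and introduce the block operators $\AAA\colon\HHH\to 2^{\HHH}$, $\AAA\xx=A_1x_1\times\cdots\times A_Kx_K$, together with $\BBB\colon\HHH\to\HHH$, $\BBB\xx=(B_1\xx,\ldots,B_K\xx)$, and $\UU\in\BL(\HHH)$, $\UU\xx=(U_1x_1,\ldots,U_Kx_K)$. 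First I would verify the three structural hypotheses of Problem~\ref{prob1} for this triple. Maximal monotonicity of $\AAA$ follows from that of each $A_i$, since a Cartesian product of maximally monotone operators is maximally monotone (\cite{livre1}). The operator $\UU$ is self-adjoint and strongly positive, being block-diagonal with self-adjoint strongly positive blocks, with lower bound $\min_i\chi_i$ where $\chi_i$ is the constant for $U_i$.

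Next I would check that the cocoercivity-type condition of Problem~\ref{prob1} holds for $\BBB$ relative to $\UU$ with the same constant $\beta$. This is immediate from the definition of the product scalar product: for all $\xx,\yy\in\HHH$,
\[
\scal{\xx-\yy}{\BBB\xx-\BBB\yy}=\sum_{i=1}^K\scal{x_i-y_i}{B_i\xx-B_i\yy},
\]
\[
\scal{\BBB\xx-\BBB\yy}{\UU(\BBB\xx-\BBB\yy)}=\sum_{i=1}^K\scal{B_i\xx-B_i\yy}{U_i(B_i\xx-B_i\yy)},
\]
so the corollary's hypothesis \eqref{coco} is precisely the scalar inequality of Problem~\ref{prob1} read for the pair $(\BBB,\UU)$ in $\HHH$. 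Likewise $0\in\AAA\overline{\xx}+\BBB\overline{\xx}$ holds if and only if the $K$ inclusions \eqref{e:pris} hold simultaneously, so the solution set of the corollary coincides with the set $\mathcal{P}$ attached to the product inclusion $0\in\AAA\overline{\xx}+\BBB\overline{\xx}$, which is nonempty by assumption. Thus Problem~\ref{prob1} applies verbatim in $\HHH$, establishing the first half of the claim.

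The one point requiring an actual argument is that Algorithm~\ref{a:al1} run in $\HHH$ reproduces the coordinatewise recursion \eqref{e:main1s}. Coupling between coordinates enters only through $\BBB$ (hence through the $B_i$ and the estimates $r_{i,n}$), whereas the resolvent step must decouple; this is the sole piece of bookkeeping that is not purely formal. I would show that, because $\gamma_n\UU\AAA$ is block-diagonal, its resolvent acts coordinatewise, i.e. $J_{\gamma_n\UU\AAA}(\zz)=\big(J_{\gamma_nU_1A_1}(z_1),\ldots,J_{\gamma_nU_KA_K}(z_K)\big)$ for every $\zz=(z_i)_i$. This follows from the characterisation of $\pp=J_{\gamma_n\UU\AAA}(\zz)$, single-valued by Lemma~\ref{l:maxmon45}\ref{l:maxmon45ii}, as the solution of $\zz\in\pp+\gamma_n\UU\AAA\pp$, which separates into the $K$ independent inclusions $z_i\in p_i+\gamma_nU_iA_ip_i$, that is $p_i=J_{\gamma_nU_iA_i}(z_i)$. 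Granting this, with $\xx_n=(x_{i,n})_i$, $\ww_n=(w_{i,n})_i$, $\zz_n=(z_{i,n})_i$, $\pp_n=(p_{i,n})_i$ and $\rr_n=(r_{i,n})_i$, each of the four lines of \eqref{e:main1s} is exactly the corresponding line of \eqref{e:main1*} read componentwise, the scalar parameters $\gamma_n,\lambda_n,\alpha_n$ being shared across coordinates as required. This identifies Algorithm~\ref{e:main1s} with an instance of Algorithm~\ref{a:al1}, completing the reduction; convergence then comes for free by applying Theorem~\ref{t:1} to the product inclusion once one notes that $\sigma(\xx_0,\ldots,\xx_n)$ is the product $\sigma$-field generated by all the coordinate variables, so that conditions \ref{cond:one1}--\ref{cond:three3} transfer directly to the $r_{i,n}$.
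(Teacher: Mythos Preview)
Your proposal is correct and follows essentially the same route as the paper: pass to the product space $\HHH$, define the block operators $\AAA$, $\BBB$, $\UU$, verify that the hypotheses of Problem~\ref{prob1} hold there (maximal monotonicity of $\AAA$ via \cite[Proposition~20.23]{livre1}, the cocoercivity condition \eqref{coco} reading as the product-space inequality, and $\UU$ self-adjoint strongly positive), and check that the algorithm decouples coordinatewise because the resolvent of the block-diagonal operator $\gamma_n\UU\AAA$ splits. The only cosmetic difference is that the paper cites \cite[Proposition~23.16]{livre1} for the resolvent formula while you derive it directly from the inclusion characterization; both are fine.
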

\begin{proof}
Let $\HHH$ be the Hilbert direct sum $\HH_1\oplus\ldots\oplus\HH_{K}$ 
with the scalar product and the norm 
defined respectively by 
\begin{equation}
\label{scla}
 \pscal{\cdot}{\cdot}\colon (\xx,\yy)\mapsto 
\sum_{i=1}^{K} \scal{x_i}{y_i} \quad \text{and}\quad
\nnm\cdot\nnm^2\colon \xx\mapsto \pscal{\xx}{\xx},
\end{equation}
where we denote by 
$\xx =(x_i)_{1\leq i\leq K}$ and $\yy =(y_i)_{1\leq i\leq K}$ 
the generic elements in $\HHH$.
Set
\begin{equation}
\label{e:acl}
\begin{cases}
 \AAA\colon \HHH \to 2^{\HHH}\colon \xx \mapsto (A_ix_i)_{1\leq i\leq K},\\
\BBB \colon \HHH\to \HHH\colon \xx \mapsto  (B_i\xx)_{1\leq i\leq K},\\
\UU\colon \HHH\to \HHH\colon \xx \mapsto  (U_ix_i)_{1\leq i\leq K}.
\end{cases}
\end{equation}
Then $\mathcal{P}=\zer(\AAA+\BBB)$, $\AAA$ is maximally monotone by \cite[Proposition 20.23]{livre1}. 
Since $\UU$ is  self-adjoint and strongly positive, $\UU\AAA$ is also maximally monotone 
by Lemma \ref{l:maxmon45},
and by \cite[Proposition 23.16]{livre1}, its resolvent is
\begin{equation}
\label{eq:res} 
(\forall \xx\in\HHH)\; J_{\UU\AAA}\xx = (J_{U_iA_i}x_i)_{1\leq i\leq K}.
\end{equation}
Moreover,  in view of  \eqref{scla}, condition \eqref{coco} can be written as 
\begin{alignat}{2}
\label{coco1} 
\pscal{\xx-\yy}{\BBB\xx - \BBB\yy} \geq \beta 
\nnm\BBB\xx - \BBB\yy \nnm_{\UU}^2,
\end{alignat}
which shows that  $\BBB$ is monotone and continuous, and hence 
maximally monotone \cite[Corollary  20.25]{livre1}.
We define
\begin{equation}
\label{e:2012-05-01}
\begin{cases}
\zz_n=(z_{1,n},\ldots, z_{K,n}),\\
\xx_n=(x_{1,n},\ldots, x_{K,n}),\\
\ww_n=(w_{1,n},\ldots, w_{K,n}),\\
\pp_n=(p_{1,n},\ldots, p_{K,n}),\\
\rr_n=(r_{1,n},\ldots, r_{K,n}),\\
\end{cases}
\end{equation}
and we get
\begin{equation}
(\forall n\in\NN)\; 
\quad\FF_n = \sigma(\xx_0,\ldots,\xx_n).
\end{equation}
Moreover, in view of \eqref{e:acl} and \eqref{e:2012-05-01}, conditions
 \ref{cond:one1}, 
 \ref{cond:two2}, and \ref{cond:three3} can be rewritten as 
\begin{enumerate}
\item[(a1)]\label{cond:one11} For every $n\in\NN, \E\left[\rr_{n}| 
\FF_n\right] = \BB\ww_n$.
\item [(b1)] \label{cond:two22}$\sum_{n\in\NN}\gamma_{n}^2
\E\left[\nnm\rr_{n}- \BB\ww_n\nnm^2\; | \FF_n\right] < +\infty$.
\item[(c1)]\label{cond:three33} $\sup_{n\in\NN}\nnm \xx_{n}-\xx_{n-1}\nnm < \infty $ a.s. and $\sum_{n\in\NN}\alpha_n < +\infty$.
\end{enumerate}
Now, using  \eqref{eq:res} and \eqref{e:2012-05-01}, we can rewrite the algorithm  \eqref{e:main1*} as 
\begin{equation}
\label{eq:rewr}
(\forall n\in \NN)\quad
\begin{array}{l}
\left\lfloor
\begin{array}{l} 
\ww_{n}=\xx_{n}+\alpha_n  (\xx_n-\xx_{n-1}) \\
\zz_n = \ww_n-\gamma_n\UU \rr_n\\
 \pp_{n}= J_{\gamma_n\UU\AAA}(\zz_{n})\\
 \xx_{n+1}=\xx_{n}+\lambda_{n} (\pp_{n}-\xx_{n}).\\
\end{array} 
\right.\\[2mm]
\end{array}
\end{equation}

\end{proof}

\begin{remark}\label{r:1}
 Here are some comments concerning the demiregularity notion and the cocoercivity of $B$.
\begin{enumerate}
\item
\label{r:1zero}
Demiregularity is a general notion that captures several properties typically used to establish strong convergence 
of iterative algorithms. See \cite{plc2010} for a discussion and special cases.
\item 
\label{r:1i}
The condition \eqref{coco} is equivalent to the cocoercivity of $\sqrt{\UU}\BB\sqrt{\UU}$ which is weaker 
than the cocoercivity of $\BB$ and this condition was first considered in \cite{icip14}.
\item \label{r:1ii}
If $\BB$ is $\beta_0$-cocoercive, condition \eqref{coco}
is satisfied with $\beta = \beta_0/\|\UU\|$. Indeed, we have 
\begin{equation}
(\forall \xx\in\HHH)\quad \pscal{\xx}{\UU\xx} \leq \|\UU\| \|\xx\|^2.
\end{equation}
Therefore 
\begin{equation}
(\forall \xx\in\HHH)(\forall \yy\in\HHH)\quad \pscal{\xx-\yy}{\BB\xx-\BB\yy} \geq \beta_0\nnm \BB\xx-\BB\yy\nnm^2 
\geq \beta\nnm \BB\xx-\BB\yy\nnm_{\UU}^2.
\end{equation}
\end{enumerate}
\end{remark}

\begin{remark}
 Here are some connections to existing work.
\begin{enumerate}
\item The system of inclusions \ref{e:pris} was first studied  in \cite{plc2010}, in the special case $ (\forall i \in \{1,\ldots, K\})\; U_i = \Id$. 
Morever, in the same paper,  in the deterministic setting, a forward-backward splitting method \cite{siam05,mercier79} in a 
suitable product space was proposed for solving it. Furthermore, when in Problem \ref{prob1}
$(\forall n\in\NN)\; \rr_{n} = Bx_{n}$, the proposed algorithm 
reduces to the inertial forward-backward algorithm proposed in \cite{Dirk} and, in this case,
the weak convergence was proved in \cite{Dirk} where the condition \ref{cond:three3} in Theorem \ref{t:1} is replaced by
 the weaker condition that  $(\alpha_n\|x_{n}-x_{n-1}\|^2)_{n\in\NN}$ is summable.
\item If  $(\forall n\in\NN)\; \alpha_n=0$, 
the proposed method reduces to a stochastic forward-backward algorithm. 
In this case, almost sure convergence of the algorithm~\eqref{e:main1*} was proved 
in \cite{LSB14b} under the additional assumption that $B_1$ is uniformly monotone, 
and under some weaker conditions on the stochastic errors.  
\item If $(\forall n\in\NN)\;\alpha_n=0$,
 almost sure convergence of the algorithm ~\eqref{e:main1*} 
  for solving Problem~\ref{prob1} was proved 
in \cite{plc14} under the stronger condition  
that 
\begin{equation}
\sum_{n\in\NN}
\E[\|\rr_{n}- Bx_{n}\| | (x_{0}, \ldots, x_{n})] < +\infty.
\end{equation} 
\end{enumerate} 
\end{remark}
\section{Applications to composite monotone inclusions involving parallel sum}
\label{s:cop}
In this section, we focus on a  structured system of monotone inclusions which covers
a wide class of monotone inclusions involving cocoercive operators in the literature, see
\cite{plc04, icip14, optim2, pesquet14, Raguet11, jota1, Tseng91} and the references therein. 
The contribution of the section is twofold: on the one hand we will show that it is possible to prove 
convergence of many existing algorithms even in the presence of stochastic perturbations. 
On the other hand, we will derive two new classes of stochastic inertial primal-dual 
splitting methods based on different choices of the preconditioning operators.  
We remark that Algorithm~\eqref{e:Algomain3b}
is new even in the deterministic setting. 
\begin{problem}
\label{probbb}
Let $m$ and $s$ be strictly positive integers, let $\nu_0$ and $\mu_0$
be in $\left]0,+\infty\right[$.
For every $i\in \{1,\ldots,m\}$, 
let $(\KK_i,\scal{\cdot}{\cdot})$ be a real Hilbert space,
let $z_i\in\KK_i$,   
let $A_{i}\colon\KK_i \to 2^{\KK_i}$
be maximally monotone, let $V_i\in\BL(\KK_i)$ be self-adjoint and 
strongly positive,
let $C_i\colon \KK_1\times\ldots\times\KK_m\to\KK_i$ be such that 
for every 
$\xx=(x_i)_{1\leq i\leq m}$ and 
$\yy= (y_i)_{1\leq i\leq m}$ in $ \KK_1\times\ldots\times\KK_m$,
\begin{alignat}{2}
\label{recoco}
\sum_{i=1}^m \scal{x_i-y_i}{C_i\xx - C_i\yy} \geq \nu_0 
\sum_{i=1}^m \|C_i\xx - C_i\yy\|_{V_i}^2.
\end{alignat}
For every $k\in\{1,\ldots, s\}$,
let $(\GG_k,\scal{\cdot}{\cdot})$ be a real Hilbert space, 
let $B_{k}\colon \GG_k\to 2^{\GG_k}$ 
be maximally monotone,
let $r_k \in \GG_k$, let $W_k\in\BL(\GG_k)$ be self-adjoint and 
strongly positive,
let $D_{k}\colon \GG_k\to 2^{\GG_k}$ be maximally monotone 
and suppose that $D_{k}^{-1}$ is single-valued and such that, for every $v_k\in\GG_k$
 and $w_k\in\GG_k$,
\begin{alignat}{2}
\label{recocoo}
 \sum_{k=1}^s\scal{v_k-w_k}{D_{k}^{-1}v_k - D_{k}^{-1}w_k} \geq \mu_0 
\sum_{k=1}^s \| D_{k}^{-1}v_k - D_{k}^{-1}w_k\|_{W_k}^2 .
\end{alignat}
 For every $i \in \{1,\ldots, m\}$ and every $k\in\{1,\ldots,s\}$,
let $L_{k,i}\colon\KK_i \to\GG_k$ 
be a  bounded linear operator.
Suppose that the set $\mathcal{P}$ of all point 
$\overline{\xx} =  (\overline{x}_1,\ldots,\overline{x}_m)$ in $ \KK_1\times \ldots\times \KK_m$ such that
\begin{alignat}{2}\label{reprimalin}
\begin{cases} 
z_1\in A_1\overline{x}_1+ \displaystyle\sum_{k=1}^s L_{k,1}^*
\bigg(( D_k\;\vuo\; B_{k})\bigg(\displaystyle\sum_{i=1}^m L_{k,i}\overline{x}_i-r_k\bigg)\bigg) 
+ C_1\overline{\xx}\\
\vdots\quad\\
z_m\in A_m\overline{x}_m+ \displaystyle\sum_{k=1}^s L_{k,m}^*
\bigg(( D_k\;\vuo\; B_{k})\bigg(\displaystyle\sum_{i=1}^m L_{k,i}\overline{x}_i-r_k\bigg)\bigg) 
+ C_m\overline{\xx}\\
\end{cases}
\end{alignat}
is non-empty. 
Denote by  $\mathcal{D}$  the set of all solutions  
$\overline{\boldsymbol{v}}=(\overline{v}_1,\ldots,\overline{v}_s) \in \GG_1\times\ldots\times\GG_s$ to the dual inclusion
 \begin{alignat}{2}\label{redualin}
&\big(\exists \xx = (x_i)_{1\leq i\leq m}\in (\KK_i)_{1\leq i\leq m}\big)\quad\notag\\
&\begin{cases}
z_1-\displaystyle\sum_{k=1}^s L_{k,1}^*\overline{v}_k\in A_1x_1 + C_1\xx \\
  \vdots\quad\\
z_m-\displaystyle\sum_{k=1}^s L_{k,m}^*\overline{v}_k\in A_mx_m + C_m\xx, 
\end{cases}
\text{and}\quad
\begin{cases}
\displaystyle\sum_{i=1}^mL_{1,i}x_i-r_1 \in B_{1}^{-1}\overline{v}_1 + D_{1}^{-1}\overline{v}_1\\
  \vdots\quad\\
\displaystyle\sum_{i=1}^mL_{s,i}x_i-r_s \in B_{s}^{-1}\overline{v}_s + D_{s}^{-1}\overline{v}_s.
\end{cases}
\end{alignat}
The problem is to find a point in  $\mathcal{P}\times\mathcal{D}$.
\end{problem}

\begin{remark}
As noted in \cite{jota1}, in Problem~\ref{probbb}, the variables are coupled in two different ways.
The first one is the smooth coupling induced by $(C_i)_{1\leq i\leq m}$. 
The second one is the non-smooth coupling involved in the parallel sums in the second terms in 
\eqref{reprimalin}. 
\end{remark}

Note that, since we assume that $\mathcal{P}$ is nonempty,  $\mathcal{D}$ is  nonempty  as well. 
Let us introduce the Hilbert direct sums 
\begin{equation}
\label{e:HeK}
\KKK = \KK_1\oplus\ldots\oplus\KK_m, \quad \GGG = \GG_1\oplus\ldots\oplus\GG_s, 
\quad \text{and}\quad \HHH = \KKK\oplus\GGG,
\end{equation}
endowed with the scalar product and the norm  defined as in \eqref{scla}. 
With a slight abuse of notation, in all spaces, 
the scalar products and norms are denoted as $\scal{\cdot}{\cdot}$
and $\|\cdot\|$, respectively.
We denote by 
$\xx = (x_i)_{1\leq i\leq m}$, $\yy = (y_i)_{1\leq i\leq m}$ 
the generic elements in $\KKK$, and by 
$\vv = (v_k)_{1\leq k\leq s}$, $\ww = (w_k)_{1\leq k\leq s}$ 
the generic elements in  $\GGG$.
The generic elements in  $\HHH$ will be denoted by $ \xk$ and $\yk$.
We also consider the linear operators
\begin{equation}
\label{eq:VeW}
\begin{cases}
\LL\colon\KKK\to\GGG
\colon\xx\mapsto\big(\sum_{i=1}^mL_{k,i}x_i\big)_{1\leq k\leq s}\\
\VV\colon\KKK\to\KKK\colon\xx
\mapsto \big( V_ix_i\big)_{1\leq i\leq m}\\
\WW\colon\GGG \to \GGG \colon \vv
\mapsto \big( W_kv_k\big)_{1\leq k\leq s}.\\
\end{cases}
\end{equation}
We first need the following lemma which follows 
from \cite[Lemma 4.3(i) and Lemma 4.9(i)]{pesquet14}.

\begin{lemma}
\label{l:1a}
In the setting of Problem~\ref{probbb}, let $\LL$, $\VV$, and $\WW$ be defined as in \eqref{eq:VeW}. 
Suppose that $\|\sqrt{\WW}\LL\sqrt{\VV}\| < 1$ and set
\begin{equation}
\label{e:UeT}
\begin{cases}
\UU^{\prime}\colon \HHH\to \HHH\colon (\xx,\vv)\mapsto 
\big(\VV^{-1}\xx- \LL^*\vv, \WW^{-1}\vv-\LL\xx \big)\\
\TT\colon \HHH\to \HHH\colon (\xx,\vv)\mapsto  (\VV\xx, (\WW^{-1}- \LL\VV\LL^* )^{-1}\vv).\\
\end{cases}
\end{equation}
Then $\UU^{\prime}$ and $\TT$ are self-adjoint and strongly positive, with
\begin{equation}
(\forall \xk \in \HHH)\quad \scal{\UU^{\prime}\xk}{\xk} \geq \frac12(1-\|\sqrt{\WW}\LL\sqrt{\VV}\|^2) 
\min\{\|\VV^{-1}\|,\|\WW^{-1}\|\}\|\xk\|^2,
\end{equation}
and 
\begin{equation}
(\forall \xk \in \HHH)\quad \scal{\TT\xk}{\xk} \geq
\min\{\|\VV^{-1}\|^{-1}, \| \WW^{-1}- \LL\VV\LL^*\|^{-1}\}\|\xk\|^2.
\end{equation}
In particular, $\UU^{\prime}$  is invertible, and its inverse $\UU=(\UU^\prime)^{-1}$ is self-adjoint and 
strongly positive. 
\end{lemma}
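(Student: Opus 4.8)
The plan is to treat $\TT$ and $\UU^{\prime}$ separately, reducing every estimate to the single spectral hypothesis, and to follow the route of \cite[Lemma~4.3(i) and Lemma~4.9(i)]{pesquet14}. Writing $\rho=\|\sqrt{\WW}\LL\sqrt{\VV}\|<1$, I would use throughout that $\VV$ and $\WW$, being block-diagonal with self-adjoint strongly positive diagonal blocks $V_i$ and $W_k$, are themselves self-adjoint and strongly positive, so that $\sqrt{\VV}$, $\sqrt{\WW}$ and their inverses are well defined, self-adjoint and strongly positive, and that $(\LL,\LL^*)$ is an adjoint pair.

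First I would handle $\UU^{\prime}$. Self-adjointness is a direct computation: expanding $\scal{\UU^{\prime}(\xx,\vv)}{(\yy,\ww)}$ and using the self-adjointness of $\VV^{-1},\WW^{-1}$ together with $\scal{\LL^*\vv}{\yy}=\scal{\vv}{\LL\yy}$ shows it equals $\scal{(\xx,\vv)}{\UU^{\prime}(\yy,\ww)}$. For strong positivity, evaluate the quadratic form $\scal{\UU^{\prime}(\xx,\vv)}{(\xx,\vv)}=\scal{\VV^{-1}\xx}{\xx}+\scal{\WW^{-1}\vv}{\vv}-2\scal{\LL\xx}{\vv}$. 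The key step is the symmetric change of variables $\widetilde{\xx}=\sqrt{\VV^{-1}}\,\xx$ and $\widetilde{\vv}=\sqrt{\WW^{-1}}\,\vv$, under which the diagonal terms become $\|\widetilde{\xx}\|^2+\|\widetilde{\vv}\|^2$ and the cross term becomes $\scal{\sqrt{\WW}\LL\sqrt{\VV}\,\widetilde{\xx}}{\widetilde{\vv}}$, so that Cauchy--Schwarz and Young's inequality give $\scal{\UU^{\prime}(\xx,\vv)}{(\xx,\vv)}\geq(1-\rho)\big(\|\widetilde{\xx}\|^2+\|\widetilde{\vv}\|^2\big)\geq\tfrac12(1-\rho^2)\big(\scal{\VV^{-1}\xx}{\xx}+\scal{\WW^{-1}\vv}{\vv}\big)$. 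Bounding the two remaining diagonal terms from below by the strong positivity of $\VV^{-1}$ and $\WW^{-1}$ then yields the stated lower bound for $\scal{\UU^{\prime}\xk}{\xk}$.

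For $\TT$, note that it is block-diagonal, so it suffices to check each block. The block $\VV$ is already self-adjoint strongly positive with $\scal{\VV\xx}{\xx}\geq\|\VV^{-1}\|^{-1}\|\xx\|^2$. The real work is the second block: I would show that $\WW^{-1}-\LL\VV\LL^*$ is self-adjoint (immediate, since $\VV$ is) and strongly positive by writing $\scal{(\WW^{-1}-\LL\VV\LL^*)\vv}{\vv}=\|\sqrt{\WW^{-1}}\vv\|^2-\|\sqrt{\VV}\LL^*\vv\|^2$ and observing, since $\sqrt{\VV}\LL^*\vv=(\sqrt{\WW}\LL\sqrt{\VV})^*\sqrt{\WW^{-1}}\vv$, that the subtracted term is at most $\rho^2\|\sqrt{\WW^{-1}}\vv\|^2$; hence $\scal{(\WW^{-1}-\LL\VV\LL^*)\vv}{\vv}\geq(1-\rho^2)\|\sqrt{\WW^{-1}}\vv\|^2>0$. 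Consequently $\WW^{-1}-\LL\VV\LL^*$ is boundedly invertible with self-adjoint strongly positive inverse, giving $\scal{(\WW^{-1}-\LL\VV\LL^*)^{-1}\vv}{\vv}\geq\|\WW^{-1}-\LL\VV\LL^*\|^{-1}\|\vv\|^2$; combining the two block bounds produces the stated estimate for $\TT$.

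Finally, the ``in particular'' clause is the standard fact that a bounded self-adjoint strongly positive operator is bijective (Lax--Milgram) with a bounded self-adjoint strongly positive inverse, applied to $\UU^{\prime}$; thus $\UU=(\UU^{\prime})^{-1}$ has the asserted properties. The main obstacle is the two strong-positivity estimates, for $\UU^{\prime}$ and for the Schur-type operator $\WW^{-1}-\LL\VV\LL^*$: both hinge on recognizing that, after the symmetric rescaling by $\sqrt{\VV}$ and $\sqrt{\WW}$, every indefinite coupling term is governed by the single operator $\sqrt{\WW}\LL\sqrt{\VV}$, whose norm $\rho$ is assumed strictly below $1$, which is precisely what makes the cross terms dominated by the positive diagonal.
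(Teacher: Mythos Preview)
Your proposal is correct and coincides with the paper's approach: the paper does not give its own argument for Lemma~\ref{l:1a} but simply invokes \cite[Lemma~4.3(i) and Lemma~4.9(i)]{pesquet14}, and your sketch is precisely the standard proof underlying those results---the symmetric rescaling by $\sqrt{\VV}$ and $\sqrt{\WW}$ that reduces all cross terms to the single operator $\sqrt{\WW}\LL\sqrt{\VV}$ of norm $\rho<1$. One small caveat: when you write that bounding $\scal{\VV^{-1}\xx}{\xx}+\scal{\WW^{-1}\vv}{\vv}$ from below ``yields the stated lower bound,'' note that the natural estimate gives the factor $\min\{\|\VV\|^{-1},\|\WW\|^{-1}\}$ (the smallest eigenvalues of $\VV^{-1}$ and $\WW^{-1}$), not $\min\{\|\VV^{-1}\|,\|\WW^{-1}\|\}$ as printed in the lemma; this appears to be a typo in the paper's statement rather than a gap in your argument.
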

\begin{lemma}
\label{l:2a}
Consider the setting of Problem~\ref{probbb}, and define
\begin{equation}
\label{e:2a}
\begin{cases}
 \MM\colon \HHH\to 2^{\HHH}\colon (\xx,\vv)
\mapsto \big( (-z_i + A_ix_i)_{1\leq i\leq m}, (r_k+ B^{-1}_kv_k)_{1\leq k\leq s}\big) \\
 \SSS\colon \HHH\to 2^{\HHH}\colon (\xx,\vv)
\mapsto \big( (\sum_{k=1}^s L_{k,i}^*v_k)_{1\leq i\leq m}, (-\sum_{i=1}^mL_{k,i}x_i)_{1\leq k\leq s}\big) \\
\QQ\colon \HHH\to {\HHH}\colon (\xx,\vv)
\mapsto \big( (C_i\xx )_{1\leq i\leq m}, ( D^{-1}_kv_k)_{1\leq k\leq s}\big). 
\end{cases}
\end{equation} 
Then the following hold. 
\begin{enumerate}
\item
\label{l:2ai}
 Problem \ref{probbb} is a special case of Problem \ref{prob1} with 
$\HH=\HHH, A= \MM+\SSS$, $B = \QQ$ and $U\in\{\UU,\TT\}$. 
\item
\label{l:2aii}
$(\emp \not=\mathcal{P}) \Rightarrow (\emp\not=\zer(\MM + \SSS + \QQ)\subset \mathcal{P}\times\mathcal{D}) $.
\end{enumerate} 
\end{lemma}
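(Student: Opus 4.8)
The plan is to check that the data $(\HHH,\ \MM+\SSS,\ \QQ,\ U)$, with $U\in\{\UU,\TT\}$, meet every hypothesis of Problem~\ref{prob1}, and then to read off the zero set of $\MM+\SSS+\QQ$. For \ref{l:2ai}, $\HHH$ is a real Hilbert space and, by Lemma~\ref{l:1a}, $\UU$ and $\TT$ are self-adjoint and strongly positive, so $U$ is admissible. For maximal monotonicity of $A=\MM+\SSS$: the operator $\MM$ is the direct sum of the shifted operators $A_i(\cdot)-z_i$ and $B_k^{-1}(\cdot)+r_k$; each $A_i$ is maximally monotone, each $B_k^{-1}$ is maximally monotone as the inverse of a maximally monotone operator, and range-translations preserve maximal monotonicity, so $\MM$ is maximally monotone by \cite[Proposition~20.23]{livre1}. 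The operator $\SSS$, which in the notation of \eqref{eq:VeW} is $(\xx,\vv)\mapsto(\LL^*\vv,-\LL\xx)$, is bounded, linear and skew, hence monotone and continuous, so maximally monotone by \cite[Corollary~20.25]{livre1}; since $\dom\SSS=\HHH$, the sum $\MM+\SSS$ is maximally monotone by \cite[Corollary~25.5]{livre1}.

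It then remains to produce a $\beta\in\RPP$ for which $\QQ$ satisfies the cocoercivity-type condition \eqref{coco} relative to $U$. Because $C_i$ and $D_k^{-1}$ are single-valued, so is $\QQ$, and I would verify the condition through Remark~\ref{r:1}\ref{r:1ii}, by first establishing ordinary cocoercivity of $\QQ$ in the canonical metric of $\HHH$. Summing \eqref{recoco} and \eqref{recocoo}, for $\xk=(\xx,\vv)$ and $\yk=(\yy,\ww)$ one gets
\[
\pscal{\xk-\yk}{\QQ\xk-\QQ\yk}\geq
\nu_0\sum_{i=1}^m\|C_i\xx-C_i\yy\|_{V_i}^2+\mu_0\sum_{k=1}^s\|D_k^{-1}v_k-D_k^{-1}w_k\|_{W_k}^2.
\]
Strong positivity of every $V_i$ and $W_k$ furnishes a common $\chi>0$ with $\|\cdot\|_{V_i}^2\geq\chi\|\cdot\|^2$ and $\|\cdot\|_{W_k}^2\geq\chi\|\cdot\|^2$, so the right-hand side dominates $\beta_0\nnm\QQ\xk-\QQ\yk\nnm^2$ with $\beta_0=\chi\min\{\nu_0,\mu_0\}$. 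Thus $\QQ$ is $\beta_0$-cocoercive, and Remark~\ref{r:1}\ref{r:1ii} yields \eqref{coco} with $\beta=\beta_0/\|U\|$, completing \ref{l:2ai}.

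For \ref{l:2aii}, I would unpack $0\in(\MM+\SSS+\QQ)(\overline{\xx},\overline{\vv})$ blockwise into
\begin{gather*}
z_i-\sum_{k=1}^sL_{k,i}^*\overline{v}_k\in A_i\overline{x}_i+C_i\overline{\xx}\quad(1\leq i\leq m),\\
\sum_{i=1}^mL_{k,i}\overline{x}_i-r_k\in B_k^{-1}\overline{v}_k+D_k^{-1}\overline{v}_k\quad(1\leq k\leq s).
\end{gather*}
The first block is the primal part of \eqref{redualin}, so $\overline{\vv}\in\mathcal{D}$. For the second block, the definition of the parallel sum shows that $\sum_iL_{k,i}\overline{x}_i-r_k\in(D_k^{-1}+B_k^{-1})\overline{v}_k$ is equivalent to $\overline{v}_k\in(D_k\vuo B_k)(\sum_iL_{k,i}\overline{x}_i-r_k)$; inserting $\sum_kL_{k,i}^*\overline{v}_k\in\sum_kL_{k,i}^*\big((D_k\vuo B_k)(\sum_iL_{k,i}\overline{x}_i-r_k)\big)$ into the first block recovers \eqref{reprimalin}, hence $\overline{\xx}\in\mathcal{P}$; this proves $\zer(\MM+\SSS+\QQ)\subseteq\mathcal{P}\times\mathcal{D}$. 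Conversely, given $\overline{\xx}\in\mathcal{P}$, the operator $\LL^*$ occurring in every primal row lets me extract from \eqref{reprimalin}, read as one inclusion in $\KKK$, a single dual vector $\overline{\vv}=(\overline{v}_k)_{1\leq k\leq s}$ with $\overline{v}_k\in(D_k\vuo B_k)(\sum_iL_{k,i}\overline{x}_i-r_k)$ and $z_i-\sum_kL_{k,i}^*\overline{v}_k\in A_i\overline{x}_i+C_i\overline{\xx}$; then $(\overline{\xx},\overline{\vv})\in\zer(\MM+\SSS+\QQ)$, which is therefore non-empty. The step I expect to demand the most care is exactly this extraction of a \emph{common} $\overline{\vv}$ serving all $m$ primal rows at once: it forces one to treat \eqref{reprimalin} as the componentwise form of a single inclusion in the product space $\KKK$ rather than as $m$ unrelated inclusions, the harmless-looking metric mismatch in \eqref{coco} having already been absorbed through the canonical-metric cocoercivity of $\QQ$.
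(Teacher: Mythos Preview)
Your argument is correct and, for part~\ref{l:2aii}, actually more explicit than the paper's: the paper simply asserts that $\mathcal{P}\neq\emp$ forces $\zer(\MM+\SSS+\QQ)\neq\emp$ and then checks the inclusion $\zer(\MM+\SSS+\QQ)\subset\mathcal{P}\times\mathcal{D}$ by eliminating $\vv$ (to land in $\mathcal{P}$) and $\xx$ (to land in $\mathcal{D}$), which is essentially your blockwise reading. Your observation that the extraction of a common $\overline{\vv}$ hinges on interpreting \eqref{reprimalin} as the componentwise transcription of the single $\KKK$-inclusion $\zz\in\AAA\overline{\xx}+\LL^*(\BB\vuo\DD)(\LL\overline{\xx}-\rr)+\CCC\overline{\xx}$ is exactly the point, and it is the intended reading.

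For part~\ref{l:2ai} the approaches diverge. You obtain \eqref{coco} by first proving $\QQ$ is $\beta_0$-cocoercive in the \emph{canonical} metric of $\HHH$ (via strong positivity of the $V_i,W_k$) and then invoking Remark~\ref{r:1}\ref{r:1ii} to pass to the $U$-metric with $\beta=\beta_0/\|U\|$. This is valid and pleasantly elementary. The paper instead works directly in the weighted metrics: it rewrites \eqref{recoco}--\eqref{recocoo} as cocoercivity of $\sqrt{\VV}\CCC\sqrt{\VV}$ and $\sqrt{\WW}\DD^{-1}\sqrt{\WW}$, and then appeals to \cite[Lemma~4.3(ii) and Lemma~4.9(ii)]{pesquet14} to obtain \eqref{coco} with the \emph{explicit} constants $\beta_{\xi}$ of \eqref{betaxi} for $U=\UU$ and $\beta$ of \eqref{beta} for $U=\TT$. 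What the paper's route buys is precisely these sharp constants: they are not incidental, since the subsequent algorithms (Algorithm~\ref{mainal}, Theorem~\ref{mainal1a}) impose the step-size restriction $\gamma_n\in[\varepsilon,(2-\varepsilon)\beta]$ with $\gamma_n\equiv 1$, so the conditions $2\widehat{\beta}>1$ in \eqref{e:2f9h79p} and $2\beta>1$ in Theorem~\ref{mainal1a} are stated in terms of those specific constants. Your $\beta_0/\|U\|$ suffices to establish the lemma as written, but would yield strictly more conservative step-size conditions downstream.
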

\begin{proof}
\ref{l:2ai}: We note that the operators $\VV$ and $\WW$, defined in equation~\eqref{eq:VeW}, are self-adjoint and 
strongly positive on $\KKK$ and $\GGG$, respectively.
Set
\begin{equation}
\label{e:acl1}
\begin{cases}
 \AAA\colon \KKK \to 2^{\KKK}\colon \xx \mapsto \cart_{i=1}^m A_ix_i\\
\BB\colon \GGG\to 2^{\GGG}\colon \vv \mapsto \cart_{k=1}^s B_kv_k\\
\CCC \colon \KKK\to \KKK\colon \xx \mapsto  (C_i\xx)_{1\leq i\leq m}\\ 
\DD\colon \GGG\to 2^{\GGG}\colon \vv \mapsto  (D_kv_k)_{1\leq k\leq s}\\
\zz =(z_1,\ldots, z_m)\\ 
 \rr =(r_1,\ldots, r_s).
\end{cases}
\end{equation}
Then, it follows from \eqref{recoco} that 
\begin{equation}
\label{e:coso}
 (\forall \xx\in\KKK)(\forall\yy\in \KKK)\quad 
\scal{\xx-\yy}{\CCC\xx-\CCC\yy}\geq \nu_0 \|\CCC\xx-\CCC\yy \|_{\VV}^2,
\end{equation}
and from \eqref{recocoo} that 
\begin{equation}
\label{e:coso1}
 (\forall \vv\in\GGG)(\forall\ww\in \GGG)\quad 
\scal{\vv-\ww}{\DD^{-1}\vv-\DD^{-1}\ww}\geq \mu_0 \|\DD^{-1}\vv-\DD^{-1}\ww \|_{\WW}^2.
\end{equation}
In view of Remark \ref{r:1}\ref{r:1i}, $\sqrt{\VV}\CCC\sqrt{\VV}$ and $\sqrt{\WW}\DD^{-1}\sqrt{\WW}$ are, 
respectively, $\nu_0$ and $\mu_0$ cocoercive. Therefore,  by \cite[Lemma 4.3(ii)]{pesquet14},  
we obtain, for every $\xi\in\left]0,+\infty\right[$,
\begin{equation}
\label{ee1}
(\forall \xk\in\HHH)(\forall \yk \in\HHH)\quad 
\scal{\xk-\yk}{\QQ\xk-\QQ\yk}\geq \beta_{\xi}\|\QQ\xk-\QQ\yk \|_{\UU}^2,
\end{equation}
where $\beta_{\xi}$ is defined by
\begin{equation}
\label{betaxi}
\beta_{\xi} = (1-\|\sqrt{\WW}\LL\sqrt{\VV}\|^2)
\min\{\nu_0(1+\xi \|\sqrt{\WW}\LL\sqrt{\VV}\|)^{-1}, \mu_0(1+\xi^{-1} \|\sqrt{\WW}\LL\sqrt{\VV}\|)^{-1}\}.
\end{equation}
By \cite[Lemma 4.9(ii)]{pesquet14},  
we obtain 
\begin{equation}
\label{ee2}
(\forall \xk\in\HHH)(\forall \yk \in\HHH)\quad 
\scal{\xk-\yk}{\QQ\xk-\QQ\yk}\geq \beta\|\QQ\xk-\QQ\yk \|_{\TT}^2,
\end{equation}
where  $\beta $ is defined by
\begin{equation}
\label{beta}
\beta = \min\{\nu_0,\mu_0(1-\|\sqrt{\WW}\LL\sqrt{\VV}\|^2)\}.
\end{equation}
 Since both $\UU$ and $\TT$ are strongly positive by Lemma \ref{l:1a}, 
either \eqref{ee1} or \eqref{ee2} implies the cocoercivity of $\QQ$
and hence
 $\QQ$ is maximally monotone  
 \cite[Corollary 20.25]{livre1}. 
Moreover, it follows from \cite[Proposition 20.23]{livre1} 
that $\AAA$ and  $\BB$  are maximally monotone. 
Let us define
\[
 \LL^*\colon \GGG\to\KKK\colon \vv \mapsto 
\bigg(\sum_{k=1}^s L_{k,i}^*v_k\bigg)_{1\leq i\leq m},
\]
and consider the following  inclusion in the space  $\HHH$,
\begin{equation}
\label{e:pr}
\big(\zz-\LL^*\overline{\vv}, \LL\overline{\xx}-\rr\big)\in
\Big( (\AAA+\CCC)\overline{\xx}, (\BB^{-1} +\DD^{-1})\overline{\vv}\Big) .
\end{equation}
Note that we can rewrite $\MM$, $\SSS$ and $\QQ$ as follows
\begin{equation}
\label{msq}
\begin{cases}
\MM\colon \HHH\to 2^{\HHH}\colon (\xx,\vv)\mapsto (-\zz+\AAA\xx,\rr+\BB^{-1}\vv)\\
\SSS\colon \HHH\to\HHH\colon (\xx,\vv) \mapsto (\LL^*\vv,-\LL\xx)\\
\QQ \colon \HHH\to \HHH\colon (\xx,\vv)\mapsto (\CCC\xx, \DD^{-1}\vv).\\
\end{cases}
\end{equation}
Thus, $\MM$ and $\SSS$ are maximally monotone.  Moreover,
$\MM+\SSS$ is maximally monotone since $\SSS$ is maximally monotone 
 and single-valued \cite[Corollary 24.4]{livre1}. 

\ref{l:2aii}: Note that $\mathcal{P}\neq \emp$ implies that 
\begin{equation}
\label{nonem}
 \zer(\MM+\SSS+\QQ) \neq \emp.
\end{equation}
Furthermore, the problem \eqref{e:pr} reduces to find a random vector 
$\zer(\MM+\SSS+\QQ)$-valued almost surely. Next, let $(\xx,\vv)$ be a solution to
\eqref{e:pr}. Then, by removing $\vv$ from  \eqref{e:pr}, we obtain 
\begin{equation}
\zz \in (\AAA+\CCC)\xx + \LL^*(\BB\vuo\DD)(\LL\xx-\rr),
\end{equation}
which implies that $\xx \in\mathcal{P}$. By the same way, removing 
$\xx$ from  \eqref{e:pr}, we obtain
\begin{equation}
-\rr \in -\LL(\AAA+\CCC)^{-1}(\zz-\LL^*\vv) + \BB^{-1}\vv +\DD^{-1}\vv.
\end{equation}
Therefore, there exists $\overline{\xx}\in\HHH$ such that 
\begin{equation}
\overline{\xx} \in (\AAA+\CCC)^{-1}(\zz-\LL^*\vv)
\quad \text{and}\quad  \LL\overline{\xx}-\rr \in   \BB^{-1}\vv +\DD^{-1}\vv,
\end{equation}
which implies that $\vv\in\mathcal{D}$. 
To sum up, $\zer(\MM+\SSS+\QQ)\subset \mathcal{P}\times\mathcal{D}$.
\end{proof}

\begin{remark} Proceeding as in \cite[Remark 4.4(i)]{pesquet14}, if we maximize 
$\beta_\xi$ with respect to $\xi$, we get that $\bf{Q}$ satisfies 
\begin{equation}
\label{e:cocopt}
(\forall \xk\in\HHH)(\forall \yk \in\HHH)\quad 
\scal{\xk-\yk}{\QQ\xk-\QQ\yk}\geq \beta_{\hat{\xi}}\|\QQ\xk-\QQ\yk \|_{\UU}^2
\end{equation}
where
\begin{equation}
\label{e:optxi}
\hat{\xi}=\frac{\nu_0-\mu_0+\sqrt{(\mu_0-\nu_0)^2+4\|\sqrt{\WW}\LL\sqrt{\VV}\|^{2}\nu_0\mu_0}}{2\mu_0\|\sqrt{\WW}\LL\sqrt{\VV}\|}
\end{equation}

\end{remark}
\subsection{A first class of stochastic inertial primal-dual splitting methods}
Our first class of  stochastic  primal-dual splitting algorithm for solving Problem \ref{probbb} which corresponds 
to the choice of $U= \UU$ in Lemma \ref{l:2a}.
\begin{algorithm}
\label{mainal}
Let $\hat{\xi}\in \left]0,+\infty\right[$ be defined by \eqref{e:optxi}, 
$\widehat{\beta}=\beta_{\hat{\xi}}$ be defined according to \eqref{betaxi}, 
let $\varepsilon \in \left]0,\min\{1,\widehat{\beta}\}\right[$,
let $(\lambda_n)_{n\in\NN}$ be a sequence 
in $\left[\varepsilon,1\right]$, and let $(\alpha_n)_{n\in\NN}$ be a sequence in $\left[0,1-\varepsilon\right]$.
For every $i\in \{1,\ldots, m\}$, 
let $(\rh_{i,n})_{n\in\NN}$ be a $\KK_i$-valued, squared integrable random process, and let
$x_{i,0}$  be a $\KK_i$-valued, squared integrable random vector and set $x_{i,-1}=x_{i,0}$.  
For every $k\in \{1,\ldots,s\}$, 
let 
$(\og_{i,n})_{n\in\NN}$ be a $\GG_i$-valued, squared integrable random process, and 
$v_{i,0}$  be a $\GG_i$-valued, squared integrable random vector and set $v_{i,-1}=v_{i,0}$.
Then, iterate, for every $n\in\NN$,
 \begin{equation}\label{e:Algomain}
 \begin{array}{|l}
(i)\;\operatorname{For}\;i=1,\ldots, m\\
\quad
\begin{array}{|l}
 c_{i,n} = x_{i,n} + \alpha_n(x_{i,n}- x_{i,n-1} )\\
\end{array}\\
(ii)\;\operatorname{For}\;k=1,\ldots, s\\
\quad
\begin{array}{|l}
 d_{k,n} = v_{k,n} + \alpha_n(v_{k,n}- v_{k,n-1} )\\
\end{array}\\
(iii)\;  \operatorname{For}\;i=1,\ldots, m\\
\quad
\begin{array}{|l}
 1.\; t_{i,n} =\sum_{k=1}^sL_{k,i}^*d_{k,n}+\rh_{i,n}\\
2.\; p_{i,n}:=J_{V_{i}A_{i}}\big(c_{i,n}-  V_{i}
(t_{i,n} -z_i)\big)\\
3.\; y_{i,n}:=2p_{i,n}-c_{i,n}\\
4.\; x_{i,n+1}:=x_{i,n}+\lambda_{n}(p_{i,n}-x_{i,n})\\
\end{array}\quad\\
(iv)\; \operatorname{For}\;k=1,\ldots, s\\
\quad
\begin{array}{|l}
1.\; u_{k,n} =\sum_{i=1}^m L_{k,i}y_{i,n}- \og_{k,n}\\
2.\; q_{k,n}:=J_{W_{k}B^{-1}_{k}}
\big(d_{k,n}+W_{k}\big(u_{k,n}-r_k\big)\big)\\
3.\; v_{k,n+1}:=v_{k,n}+\lambda_{n}(q_{k,n}-v_{k,n}).\\
\end{array}
\end{array}
\end{equation} 
\end{algorithm}

\begin{theorem} 
\label{t:2}
Consider Algorithm~\ref{mainal} and suppose that 
\begin{equation}
\label{e:2f9h79p}
\widehat{\beta} > 1/2. 
 \end{equation}
Set
\begin{equation}
(\forall n\in\NN)\quad
\begin{cases}
 \xk_n:=(x_{1,n},\ldots, x_{m,n},v_{1,n},\ldots,v_{s,n})\\
\rk_n = (\rh_{1,n},\ldots, \rh_{m,n},\og_{1,n},\ldots,\og_{s,n})\\
\FF_n = \sigma(\xk_0,\ldots,\xk_n)\\
\end{cases}
\end{equation}
and suppose that the following conditions are satisfied:
\begin{enumerate}
\item\label{cond:onei} 
$ (\forall n\in \NN)\;
\E[\rk_{n}| \FF_n] = \big((C_i(c_{1,n}\ldots, c_{m,n}))_{1\leq i\leq m}, D_{1}^{-1}d_{1,n},\ldots, D_{s}^{-1}d_{s,n}\big)$ a.s.
\item \label{cond:twoii}
$\sum_{n\in\NN}
\E[ \sum_{i=1}^m\|\rh_{i,n}-C_i(c_{1,n}\ldots, c_{m,n})\|^2
+ \sum_{k=1}^s\|  \og_{k,n}- D_{k}^{-1}d_{k,n}\|^2   
| \FF_n] < +\infty$ a.s.
\item\label{cond:twoiii}
 $\sup_{n\in\NN}\sum_{i=1}^m\| x_{i,n}-x_{i,n-1}\|^2 < \infty $ a.s.
and  $\sup_{n\in\NN}\sum_{k=1}^s\| v_{k,n}-v_{k,n-1}\|^2< \infty $ a.s.,
and $\sum_{n\in\NN}\alpha_n < +\infty $.
\end{enumerate}
Then the following hold for some random vector 
$(\overline{x}_1,\ldots,\overline{x}_m, \overline{v}_1,\ldots,\overline{v}_s)$,
$\mathcal{P}\times\mathcal{D}$-valued a.s.
\begin{enumerate}
 \item
\label{t:2i} $(\forall i\in\{1,\ldots,m\})$\; $x_{i,n}\weakly \overline{x}_i$ and 
$(\forall k\in\{1,\ldots,s\})$\; $v_{k,n}\weakly\overline{v}_k$ a.s.
\item \label{t:2ii}
Suppose that the operator $(x_i)_{1\leq i\leq m} \mapsto (C_j(x_i)_{1\leq i\leq m})_{1\leq j\leq m}$ 
is demiregular at 
$(\overline{x}_1,\ldots,\overline{x}_m)$, then $(\forall i\in\{1,\ldots,m\})\; $
$x_{i,n}\to \overline{x}_i$ a.s. 
\item 
\label{t:2iii}
Suppose that there exists $j\in\{1,\ldots,m\}$ such that $D_{j}^{-1}$ is demiregular 
at 
$\overline{v}_j$, then $ 
v_{j,n}$
$\to \overline{v}_j$ a.s. 
\item
\label{t:2iv}
Suppose that there exists $j\in\{1,\ldots,m\}$ and an operator 
$C\colon\KK_j \to \KK_j$ such that 
$(\forall(x_i)_{1\leq i\leq m}\in(\KK_i)_{1\leq i\leq m})\; C_j(x_1,\ldots,x_m) = Cx_j$ and 
$C$ is demiregular at $\overline{x}_j$, then
$x_{j,n}\to \overline{x}_j$ a.s. 
\end{enumerate}
\end{theorem}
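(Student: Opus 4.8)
The plan is to recognize one sweep of Algorithm~\ref{mainal} as a single step of Algorithm~\ref{a:al1} in the product space $\HHH=\KKK\oplus\GGG$, with $U=\UU$ and step size $\gamma_n\equiv1$, and then to quote Theorem~\ref{t:1}. By Lemma~\ref{l:2a}\ref{l:2ai}, Problem~\ref{probbb} is the instance of Problem~\ref{prob1} with $A=\MM+\SSS$ maximally monotone and $B=\QQ$ cocoercive relative to $\|\cdot\|_{\UU}$, the relevant cocoercivity constant being $\widehat\beta=\beta_{\hat\xi}$ from \eqref{betaxi}--\eqref{e:optxi}, as recorded in \eqref{ee1}. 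Since the resolvents in \eqref{e:Algomain} carry no extra scaling, the matching step size in \eqref{e:main1*} is $\gamma_n\equiv1$, and the hypothesis \eqref{e:2f9h79p} enters precisely here: $\widehat\beta>1/2$ is exactly the condition under which $\gamma_n\equiv1$ is admissible, i.e.\ $1\in[\varepsilon,(2-\varepsilon)\widehat\beta]$ for $\varepsilon$ chosen small enough. I will write $\ww_n=(c_{1,n},\ldots,c_{m,n},d_{1,n},\ldots,d_{s,n})$ for the inertial point, so that $\xk_n$, $\rk_n$, $\ww_n$, $\FF_n$ play the roles of $x_n$, $\rr_n$, $w_n$, $\FF_n$ in Algorithm~\ref{a:al1}.

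The principal step, and the main obstacle, is the purely deterministic verification that steps (iii)--(iv) of \eqref{e:Algomain} evaluate the single backward step $\pp_n=J_{\UU(\MM+\SSS)}(\ww_n-\UU\rk_n)$. Writing this as the inclusion $\UU^\prime(\ww_n-\pp_n)\in(\MM+\SSS)\pp_n+\rk_n$, with $\UU^\prime=\UU^{-1}$ given by \eqref{e:UeT}, and splitting into primal and dual blocks, one eliminates the off-diagonal $\LL$-terms: after multiplication by $\VV$ the primal block collapses to $p_{i,n}=J_{V_iA_i}\big(c_{i,n}-V_i(t_{i,n}-z_i)\big)$ with $t_{i,n}=\sum_kL_{k,i}^*d_{k,n}+\rh_{i,n}$, i.e.\ steps (iii).1--(iii).2; substituting these $p_{i,n}$ into the dual block forces the over-relaxed quantity $y_{i,n}=2p_{i,n}-c_{i,n}$ of step (iii).3 --- the factor $2$ coming from the coincidence of the off-diagonal $-\LL$ of $\UU^\prime$ with the $-\LL$ of $\SSS$ --- and then, after multiplication by $\WW$, the quantity $u_{k,n}=\sum_iL_{k,i}y_{i,n}-\og_{k,n}$ and the dual resolvent $q_{k,n}=J_{W_kB_k^{-1}}\big(d_{k,n}+W_k(u_{k,n}-r_k)\big)$ of steps (iv).1--(iv).2. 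The closing relaxation $\xk_{n+1}=\xk_n+\lambda_n(\pp_n-\xk_n)$ is steps (iii).4 and (iv).3. This is the computation of \cite{optim2,icip14}; its only real content is the design of $\UU$ in Lemma~\ref{l:1a}.

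Next I translate the stochastic hypotheses. With $\ww_n$ as above, $\QQ\ww_n=\big((C_i(c_{1,n},\ldots,c_{m,n}))_i,(D_k^{-1}d_{k,n})_k\big)$, so condition \ref{cond:onei} is exactly $\E[\rk_n\mid\FF_n]=\QQ\ww_n$, namely condition \ref{cond:one1} of Theorem~\ref{t:1}; condition \ref{cond:twoii} is $\sum_n\E[\|\rk_n-\QQ\ww_n\|^2\mid\FF_n]<\infty$, namely condition \ref{cond:two2} (here $\gamma_n=1$, and the $\gamma_n^2$ weighting is harmless since $(\gamma_n)$ is bounded); and condition \ref{cond:twoiii} is $\sup_n\|\xk_n-\xk_{n-1}\|^2<\infty$ together with $\sum_n\alpha_n<\infty$, namely condition \ref{cond:three3}. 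Square integrability of $\xk_0$ and of the process $(\rk_n)$ is inherited componentwise, so all the hypotheses of Theorem~\ref{t:1} hold.

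Finally I read off the assertions. Theorem~\ref{t:1}\ref{t:1i} gives $\xk_n\weakly\overline{\xk}$ a.s.\ to a $\zer(\MM+\SSS+\QQ)$-valued random vector; weak convergence in the direct sum $\HHH$ is componentwise, which is \ref{t:2i}, and by Lemma~\ref{l:2a}\ref{l:2aii} the limit $\overline{\xk}=(\overline{\xx},\overline{\vv})$ lies a.s.\ in $\mathcal P\times\mathcal D$. For the strong statements, Theorem~\ref{t:1}\ref{t:1ia} yields $\QQ\xk_n\to\QQ\overline{\xk}$, i.e.\ componentwise $\CCC\xx_n\to\CCC\overline{\xx}$ and $\DD^{-1}\vv_n\to\DD^{-1}\overline{\vv}$. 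Combining these strong limits with the weak limits of \ref{t:2i} and the demiregularity assumptions --- exactly as in the proof of Theorem~\ref{t:1}\ref{t:1ii}, but block by block --- gives \ref{t:2ii} (demiregularity of $\CCC=\xx\mapsto(C_j\xx)_j$ at $\overline{\xx}$ forces $\xx_n\to\overline{\xx}$), \ref{t:2iii} (demiregularity of $D_j^{-1}$ at $\overline v_j$ applied to the $j$-th dual block), and \ref{t:2iv} (from $C_j\xx_n=Cx_{j,n}\to C\overline x_j$ and demiregularity of $C$ at $\overline x_j$), which completes the plan.
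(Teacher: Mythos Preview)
Your proposal is correct and follows essentially the same approach as the paper: you cast Algorithm~\ref{mainal} as an instance of Algorithm~\ref{a:al1} in the product space $\HHH$ with $U=\UU$, $A=\MM+\SSS$, $B=\QQ$, and $\gamma_n\equiv 1$ (admissible by \eqref{e:2f9h79p}), then invoke Theorem~\ref{t:1} and Lemma~\ref{l:2a}\ref{l:2aii}. The paper carries out the same reduction, writing the inclusion $\UU^{-1}(\ck_n-\yk_n)-\rk_n\in(\MM+\SSS)\yk_n$ componentwise to identify it with steps (iii)--(iv), and then reads off \ref{t:2i}--\ref{t:2iv} from Theorem~\ref{t:1}\ref{t:1i}--\ref{t:1ia} exactly as you do.
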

\begin{proof}
We first observe that \eqref{e:Algomain} is equivalent to 
\begin{equation}
\label{e:Algomainr}
\begin{array}{|l}
(i)\;\operatorname{For}\;i=1,\ldots, m\\
\quad
\begin{array}{|l}
 c_{i,n} = x_{i,n} + \alpha_n(x_{i,n}- x_{i,n-1} )\\
\end{array}\\
(ii)\;\operatorname{For}\;k=1,\ldots, s\\
\quad
\begin{array}{|l}
 d_{k,n} = v_{k,n} + \alpha_n(v_{k,n}- v_{k,n-1} )\\
\end{array}\\
(iii)\;\operatorname{For}\;i=1,\ldots, m\\
\quad
\begin{array}{|l}
1.\;V_{i}^{-1}(c_{i,n}- p_{i,n})-\sum_{k=1}^sL_{k,i}^*d_{k,n}-\rh_{i,n}\in
-z_i + A_ip_{i,n}\\
2.\;x_{i,n+1}=x_{i,n}+\lambda_{n}(p_{i,n}-x_{i,n})\\
\end{array}
\quad\\
(iv)\; \operatorname{For}\;k=1,\ldots, s\\
\quad
\begin{array}{|l}
1.\; W_{k}^{-1}(d_{k,n}-q_{k,n}) - \sum_{i=1}^m L_{k,i}(c_{i,n}- 
p_{i,n})- \og_{i,n}\in 
B_{k}^{-1}q_{k,n}
-\sum_{i=1}^m L_{i}p_{i,n}\\
2.\;v_{k,n+1}=v_{k,n}+\lambda_{n}(q_{k,n}-v_{k,n}).\\
\end{array}
\end{array}
\end{equation}
Upon setting
\begin{equation}
\label{ckyk}
(\forall n\in\NN)\quad
\begin{cases}
\ck_n = (c_{1,n},\ldots,c_{m,n},d_{1,n},\ldots,c_{s,n})\\
\yk_n = (p_{1,n},\ldots,p_{m,n},q_{1,n},\ldots,q_{s,n}),\\
\end{cases}
\end{equation}
we can rewrite \eqref{e:Algomainr} as the following 
\begin{equation}
\label{e:Algomainr}
(\forall n\in\NN)\quad
\begin{array}{|l}
1.\; \ck_{n} = \xk_{n} + \alpha_n(\xk_{n}- \xk_{n-1} )\\
2.\; \UU^{-1}(\ck_n-\yk_n)-\rk_n\in \MM\yk_n +\SSS \yk_n\\
3.\;\xk_{n+1}=\xk_{n}+\lambda_{n}(\yk_{n}-\xk_{n})\\
\end{array}
\Leftrightarrow 
\begin{array}{|l}
1.\; \ck_{n} = \xk_{n} + \alpha_n(\xk_{n}- \xk_{n-1} )\\
2.\; \yk_n = J_{\UU(\MM+\SSS)}(\ck_n-\UU\rk_n)\\
3.\;\xk_{n+1}=\xk_{n}+\lambda_{n}(\yk_{n}-\xk_{n}),\\
\end{array}
\end{equation}
which is a special instance of the iteration \eqref{e:main1*} with 
$(\forall n\in\NN)\;\gamma_n =1\in \left]\varepsilon,(2-\varepsilon)\widehat{\beta} \right] $.
We next see that our conditions can be rewritten in the space $\HHH$ as 
\begin{enumerate}
\item[(a1)]\label{cond:one} 
$ (\forall n\in \NN)\;
\E[\rk_{n}| \FF_n] = \QQ\ck_n.
$
\item [(b1)] \label{cond:two}
$\sum_{n\in\NN} \E[ \|  \rk_{n}-\QQ\ck_n\|^2| \FF_n] < +\infty.$
\item[(c1)] $\sup_{n\in\NN}\| \xk_{n}-\xk_{n-1}\|^{2} < \infty $ a.s.
and $\sum_{n\in\NN}\alpha_n < +\infty $.
\end{enumerate}
Therefore, every specific conditions in Algorithm \ref{a:al1} and Theorem \ref{t:1} are satisfied.

\ref{t:2i}: In view of Theorem \ref{t:1}\ref{t:1i}, $\xk_n\weakly (\overline{\xx},\overline{\vv})$ which is equivalent to 
$(\forall i\in\{1,\ldots,m\})\; x_{i,n}\weakly \overline{x}_i$ and $(\forall k\in\{1,\ldots,s\})\; v_{k,n}\weakly \overline{v}_k$.

\ref{t:2ii}$\&$ \ref{t:2iii}: By Theorem  \ref{t:1}\ref{t:1ia}, we have $\QQ\xk_n\to \QQ\overline{\xk}$ which is equivalent to 
\begin{equation}
\begin{cases}
 (\forall i\in\{1,\ldots,m\})\quad C_i(x_{1,n},\ldots,x_{m,n}) \to C_i(\overline{x}_{1},\ldots,\overline{x}_m)\\
 (\forall k\in\{1,\ldots,s\})\quad D_{k}^{-1}v_{k,n} \to D_{k}^{-1}\overline{v}_k.\\
\end{cases}
\end{equation}
Therefore, if the operator $(x_i)_{1\leq i\leq m} \mapsto (C_j(x_i)_{1\leq i\leq m})_{1\leq j\leq m}$ is demiregular at 
$(\overline{x}_1,\ldots,\overline{x}_m)$, we obtain $(\forall i\in\{1,\ldots,m\})\; x_{i,n}\to\overline{x}_i $. By the same 
season,  if there exists $j\in \{1,\ldots,m\}$ such that $D_{j}^{-1}$ is demiregular at $\overline{v}_j$, the 
$v_{j,n}\to\overline{v}_j $.

\ref{t:2iv} This conclusion follows from the definition of the demiregular operators by the same reason as in \ref{t:2iii}.
 \end{proof}

We next provide an application to the following minimization problem considered in \cite[Problem 5.1]{jota1}, where
several applications are discussed.

\begin{example}
\label{ex:prob2a} 
Let $m$ and $s$ be strictly positive integers.
For every $i\in\{1,\ldots,m\}$, 
let $\KK_i$ be a real Hilbert space,
let $z_i\in\KK_i$, let $f_{i}\in\Gamma_0(\KK_i)$.
For every $k\in\{1,\ldots, s\}$,
let $\GG_k$ be a real Hilbert space, 
let $r_k \in \GG_k$, let $\ell_{k}\in\Gamma_0(\GG_k)$ be a strongly convex function, 
let $g_{k}\in \Gamma_0(\GG_k)$. 
For every $i \in \{1,\ldots, m\}$ and every $k\in\{1,\ldots,s\}$,
let $L_{k,i}\colon\KK_i \to\GG_k$ 
be a  bounded linear operator.
Let $\varphi\colon \KK_1\times\ldots \times\KK_m \to \RR$ 
be a convex differentiable function with a Lipschitz continuous gradient.  
Suppose that there exists 
$\overline{\xx} =(\overline{x}_1,\ldots,\overline{x}_m)$ such that,
for every $ i\in\{1,\ldots,m\}$,
\begin{equation}
\label{e:bfi}
z_i\in  \partial f_i(\overline{x}_i)+ \sum_{k=1}^s L_{k,i}^*\circ
\Big(\partial \ell_k\;\vuo\; \partial g_{k}\Big)\circ\bigg(\sum_{j=1}^m L_{k,j}\overline{x}_j-r_k\bigg) + 
\nabla_{i}\varphi(\overline{\xx}),
\end{equation}
where $\nabla_i\varphi$ is the $i$-{{\small\em th}} component of the gradient $\nabla\varphi$,
and that the set $\mathcal{P}$ of solutions to the primal problem
\begin{alignat}{2} 
\label{primal2}
\underset{x_1\in\KK_1,\ldots, x_m\in\KK_m}{\text{minimize}}
\sum_{i=1}^{m}\big(f_i(x_i) - \scal{x_i}{z_i}\big)
+ \sum_{k=1}^s\big(\ell_k\;\vuo\; g_{k}\big)\bigg(\sum_{i=1}^m L_{k,i}x_i -r_k\bigg)\notag\\
\hfill + \varphi(x_1,\ldots,x_m),
\end{alignat}
is nonempty. Denote by $\mathcal{D}$ the set of solutions
to the dual problem  
\begin{alignat}{2}
\label{dual2} 
\underset{v_1\in\GG_1,\ldots, v_s\in\GG_s}{\text{minimize}}
&\bigg(\varphi^{*}\;\vuo\;\bigg(\sum_{i=1}^m f_{i}^*\bigg)\bigg)\bigg(\Big(z_i-\sum_{k=1}^s L_{k,i}^*v_k\Big)_{1\leq i\leq m} \bigg)\notag\\
&\hspace{4cm}
+\sum_{k=1}^{s}\bigg(\ell^{*}_k(v_k) + g_{k}^*(v_k) + \scal{v_k}{r_k}\bigg).
\end{alignat}
The problem is then to find a random vector $\mathcal{P}\times\mathcal{D}$-valued almost surely.
\end{example}

\begin{corollary} 
For every $i\in \{1,\ldots, m\}$, 
let $V_{i}\in\BL(\KK_i)$ be self-adjoint and strongly positive.
 Let $\nu_0$  be a strictly positive number such that 
  for every 
$\xx=(x_i)_{1\leq i\leq m}$ and 
$\yy= (y_i)_{1\leq i\leq m}$ in $ \KK_1\times\ldots\times\KK_m$,
\begin{alignat}{2}
\label{cos1}
\sum_{i=1}^m \scal{x_i-y_i}{\nabla_i\varphi(\xx) - \nabla_i\varphi(\yy)} \geq \nu_0 
\sum_{i=1}^m \| \nabla_i\varphi(\xx) - \nabla_i\varphi(\yy)\|_{V_i}^2.
\end{alignat}
For every $k\in \{1,\ldots, s\}$, 
let $W_{k}\in\BL(\GG_k)$ be self-adjoint and strongly positive.
 Let $\mu_0$  be a strictly positive number such that 
  for every 
$\vv=(v_k)_{1\leq k\leq s}$ and 
$\ww= (w_k)_{1\leq k\leq s}$ in $ \GG_1\times\ldots\times\GG_s$,
\begin{alignat}{2}
\label{cos2}
\sum_{k=1}^s \scal{v_k-w_k}{\nabla \ell_{k}^*(v_k) -\nabla \ell_{k}^*(w_k) } \geq \mu_0 
\sum_{k=1}^s \|\nabla \ell_{k}^*(v_k) -\nabla \ell_{k}^*(w_k) \|_{W_k}^2.
\end{alignat}
Let $\hat{\xi}\in \left]0,+\infty\right[$ be defined by \eqref{e:optxi}, 
$\widehat{\beta}=\beta_{\hat{\xi}}$ be defined according to \eqref{betaxi}, 
 let $\varepsilon \in \left]0,\min\{1,\widehat{\beta}\}\right[$,
let $(\lambda_n)_{n\in\NN}$ be a sequence 
in $\left[\varepsilon,1\right]$,
let $(\alpha_n)_{n\in\NN}$ be a sequence in $\left[0,1-\varepsilon\right]$.
For every $i\in \{1,\ldots, m\}$, 
 let $(\rh_{i,n})_{n\in\NN}$ be a $\KK_i$-valued, squared integrable random process, and let
$x_{i,0}$  be a $\KK_i$-valued, squared integrable random vector and set $x_{i,-1}=x_{i,0}$.  
For every $k\in \{1,\ldots,s\}$, 
let $(\og_{i,n})_{n\in\NN}$ be a $\GG_i$-valued, squared integrable random process, and let
$v_{i,0}$  be a $\GG_i$-valued, squared integrable random vector and set $v_{i,-1}=v_{i,0}$.
Then, iterate, for every $n\in\NN$,
 \begin{equation}\label{e:Algomain2}
 \begin{array}{|l}
(i)\;\operatorname{For}\;i=1,\ldots, m\\
\quad
\begin{array}{|l}
 c_{i,n} = x_{i,n} + \alpha_n(x_{i,n}- x_{i,n-1} )\\
\end{array}\\
(ii)\;\operatorname{For}\;k=1,\ldots, s\\
\quad
\begin{array}{|l}
 d_{k,n} = v_{k,n} + \alpha_n(v_{k,n}- v_{k,n-1} )\\
\end{array}\\
(iii)\;  \operatorname{For}\;i=1,\ldots, m\\
\quad
\begin{array}{|l}
 1.\; t_{i,n} =\sum_{k=1}^sL_{k,i}^*d_{k,n}+\rh_{i,n}\\
2.\; p_{i,n}:=\prox_{f_{i}}^{V_{i}^{-1}}\big(c_{i,n}-  V_{i}
(t_{i,n} -z_i)\big)\\
3.\; y_{i,n}:=2p_{i,n}-c_{i,n}\\
4.\; x_{i,n+1}:=x_{i,n}+\lambda_{n}(p_{i,n}-x_{i,n})\\
\end{array}\quad\\
(iv)\; \operatorname{For}\;k=1,\ldots, s\\
\quad
\begin{array}{|l}
1.\; u_{k,n} =\sum_{i=1}^m L_{k,i}y_{i,n}- \og_{k,n}\\
2.\; q_{k,n}:=\prox_{g_{k}^*}^{W_{k}^{-1}}
\big(d_{k,n}+W_{k}\big(u_{k,n}-r_k\big)\big)\\
3.\; v_{k,n+1}:=v_{k,n}+\lambda_{n}(q_{k,n}-v_{k,n}).\\
\end{array}
\end{array}
\end{equation} 
Set  
\begin{equation}
(\forall n\in\NN)\quad
\begin{cases}
 \xk_n:=(x_{1,n},\ldots, x_{m,n},v_{1,n},\ldots,v_{s,n})\\
\rk_n = (\rh_{1,n},\ldots, \rh_{m,n},\og_{1,n},\ldots,\og_{s,n})\\
\FF_n = \sigma(\xk_0,\ldots,\xk_n).\\
\end{cases}
\end{equation}
Suppose that the following conditions are satisfied.
\begin{enumerate}
\item\label{cond:onei} 
$ (\forall n\in \NN)\;
\E[\rk_{n}| \FF_n] = \big((\nabla_i\varphi(c_{1,n}\ldots, c_{m,n}))_{1\leq i\leq m},
 \nabla\ell_{1}^*(d_{1,n}),\ldots, \nabla\ell_{s}^*(d_{s,n})\big).
$
\item \label{cond:twoii}
$\sum_{n\in\NN}
\E[ \| \sum_{i=1}^m \rh_{i,n}-\nabla_i\varphi(c_{1,n}\ldots, c_{m,n}) \|^2
+ \sum_{k=1}^s\|  \og_{k,n}-\nabla\ell_{k}^*(d_{k,n}) \|^2   
| \FF_n] < +\infty.$
\item\label{cond:twoiii}
 $\max_{1\leq i\leq m}\sup_{n\in\NN}\| x_{i,n}-x_{i,n-1}\| < \infty $ a.s.
and  $\max_{1\leq k\leq s}\sup_{n\in\NN}\| v_{k,n}-v_{k,n-1}\|< \infty $ a.s.,
and $\sum_{n\in\NN}\alpha_n < +\infty $.
\end{enumerate}
Then the following hold for some random vector 
$(\overline{x}_1,\ldots,\overline{x}_m, \overline{v}_1,\ldots,\overline{v}_s)$,
$\mathcal{P}\times\mathcal{D}$-valued almost surely.
\begin{enumerate}
 \item
\label{t:2i} $(\forall i\in\{1,\ldots,m\})$\; $x_{i,n}\weakly \overline{x}_i$ and 
$(\forall k\in\{1,\ldots,s\})$\; $v_{k,n}\weakly\overline{v}_k$ almost surely.
\item \label{t:2ii}
Suppose that the function $\varphi$ 
is uniformly convex at 
$(\overline{x}_1,\ldots,\overline{x}_m)$, then $(\forall i\in\{1,\ldots,m\})\; $
$x_{i,n}\to \overline{x}_i$ almost surely. 
\item 
\label{t:2iii}
Suppose that there exists $j\in\{1,\ldots,m\}$ such that $\ell_{j}^{*}$ is uniformly convex
 at $\overline{v}_j$, then $v_{j,n}$
$\to \overline{v}_j$ almost surely. 
\item
\label{t:2iv}
Suppose that  $(\forall  (x_1,\ldots,x_m)\in \KK_1\times\ldots\times\KK_m)\; \varphi(x_1,\ldots,x_m) = \sum_{i=1}^mh_i(x_i)$ where
each $h_i\in\Gamma_0(\KK_i)$ is a convex differentiable function,   
there exists $j\in\{1,\ldots,m\}$ such that $h_j$ is uniformly convex at $\overline{x}_j$, then
$x_{j,n}\to \overline{x}_j$ almost surely. 
\end{enumerate}
\end{corollary}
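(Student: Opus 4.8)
The plan is to exhibit Example~\ref{ex:prob2a} as the instance of Problem~\ref{probbb} obtained by setting $A_i=\partial f_i$, $B_k=\partial g_k$, $D_k=\partial\ell_k$, and $C_i=\nabla_i\varphi$, and then to invoke Theorem~\ref{t:2}. Since $f_i,g_k,\ell_k\in\Gamma_0$, each of $\partial f_i,\partial g_k,\partial\ell_k$ is maximally monotone, so the operators $A_i$, $B_k$, $D_k$ satisfy the standing hypotheses of Problem~\ref{probbb}. The strong convexity of $\ell_k$ makes $\ell_k^*$ differentiable with Lipschitz-continuous gradient, whence $D_k^{-1}=(\partial\ell_k)^{-1}=\partial\ell_k^*=\nabla\ell_k^*$ is single-valued; with this reading, \eqref{cos1} is exactly the cocoercivity hypothesis \eqref{recoco} on $C_i=\nabla_i\varphi$, and \eqref{cos2} is exactly \eqref{recocoo} on $D_k^{-1}=\nabla\ell_k^*$.

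Next I would match the iterations. Using the identity $\prox_f^U=J_{U^{-1}\partial f}$ one has $\prox_{f_i}^{V_i^{-1}}=J_{V_iA_i}$, while \eqref{e:equi} gives $\prox_{g_k^*}^{W_k^{-1}}=J_{W_k\partial g_k^*}=J_{W_kB_k^{-1}}$. Substituting these into \eqref{e:Algomain2} reproduces \eqref{e:Algomain} line for line, so algorithm~\eqref{e:Algomain2} is the algorithm of Theorem~\ref{t:2}. In the same manner, once $C_i(c_{1,n},\ldots,c_{m,n})$ is read as $\nabla_i\varphi(c_{1,n},\ldots,c_{m,n})$ and $D_k^{-1}d_{k,n}$ as $\nabla\ell_k^*(d_{k,n})$, the hypotheses (i)--(iii) of the corollary reduce to conditions (i)--(iii) of Theorem~\ref{t:2}.

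It then remains to identify the solution sets. Applying the Fermat rule to \eqref{primal2} together with the sum and chain rules for subdifferentials and the parallel-sum identity $\partial(\ell_k\,\vuo\,g_k)=(\partial\ell_k)\,\vuo\,(\partial g_k)$, the primal optimality condition \eqref{e:bfi} is precisely the inclusion \eqref{reprimalin} for the operators above; the Fenchel--Rockafellar duality between \eqref{primal2} and \eqref{dual2}, as detailed in \cite{jota1}, identifies the dual solution set $\mathcal{D}$ of Example~\ref{ex:prob2a} with that of Problem~\ref{probbb}. Hence the set $\mathcal{P}\times\mathcal{D}$ here coincides with the one targeted by Theorem~\ref{t:2}, and assertion~(i) of the corollary is Theorem~\ref{t:2}\,(i).

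Finally, for the strong-convergence statements I would convert uniform convexity into demiregularity. If $\varphi$ is uniformly convex at $\overline{\xx}$ then $\nabla\varphi=\partial\varphi$ is demiregular there (see \cite{plc2010}), so Theorem~\ref{t:2}\,(ii) yields (ii); uniform convexity of $\ell_j^*$ makes $\nabla\ell_j^*=D_j^{-1}$ demiregular at $\overline{v}_j$, giving (iii) via Theorem~\ref{t:2}\,(iii); and in the separable case $\varphi=\sum_i h_i$ uniform convexity of $h_j$ renders the $j$-th coordinate operator demiregular at $\overline{x}_j$, giving (iv) via Theorem~\ref{t:2}\,(iv). The only genuine subtlety is the solution-set identification in the third step: it rests on the subdifferential sum and chain rules, whose qualification condition is supplied by the assumed solvability \eqref{e:bfi} (equivalently, the nonemptiness of $\mathcal{P}$), so no extra constraint qualification must be imposed.
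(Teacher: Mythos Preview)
Your approach is exactly the paper's: specialize Problem~\ref{probbb} via $A_i=\partial f_i$, $B_k=\partial g_k$, $D_k=\partial\ell_k$, $C_i=\nabla_i\varphi$, identify $\prox_{f_i}^{V_i^{-1}}=J_{V_iA_i}$ and $\prox_{g_k^*}^{W_k^{-1}}=J_{W_kB_k^{-1}}$ via \eqref{set0} and \eqref{e:equi}, and invoke Theorem~\ref{t:2} (the paper outsources the reduction to \cite[Corollary~5.1]{jota1}, while you spell it out, including the uniform-convexity-to-demiregularity step). One small correction to your last paragraph: for the solution-set identification you only need that inclusion solutions are minimizers, which is the qualification-free direction of Fermat (since $\partial f_i+\cdots\subset\partial(\text{sum})$ always holds); the assumption \eqref{e:bfi} does not supply a constraint qualification, it simply guarantees that the inclusion $\mathcal{P}$ of Problem~\ref{probbb} is nonempty so that Theorem~\ref{t:2} applies.
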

\begin{proof}
Using the same argument as in the proof of \cite[Corollary 5.1]{jota1}, Example \ref{ex:prob2a} reduces 
to special case of Problem \ref{probbb} with 
\begin{equation}
\label{e:sett1}
 \begin{cases}
  (\forall i\in\{1,\ldots,m\})\quad A_i =\partial f_i\quad
 \text{and}\quad C_i =\nabla_i\varphi,\\
 (\forall k\in\{1,\ldots,s\})\quad B_k =\partial g_k\quad \text{and}\quad D_k =\partial\ell_k.\\
 \end{cases}
\end{equation}
Furthermore, by \eqref{set0} and \eqref{e:equi}, the algorithm \eqref{e:Algomain2} is a special case of 
the algorithm \eqref{e:Algomain}. Therefore, the conclusions follow from Theorem \ref{t:2}.
\end{proof}
\begin{remark} Here are some remarks.
\begin{enumerate}
\item
The algorithm proposed in this section is new, also if  $(\forall n\in\NN)\;\alpha_n=0$, 
since a stochastic algorithm for system of monotone inclusions involving both non-smooth 
coupling and smooth coupling is not available in the literature.  The stochastic algorithms  for either solving smooth coupling 
or non-smooth coupling can be found in \cite[Section 5.2]{plc14} or   \cite[Section 4]{pesquet14}.
In the case when $(\forall n\in\NN)\; \alpha_n=0$, we obtain the stochastic extension
of the framework in \cite[Section 4 and 5]{jota1} and in \cite[Section 4.2]{icip14}. 
Furthermore, in the special case when $m=s=1$, we obtain
a stochastic version of the inertial primal-dual algorithm in \cite{Dirk}. See also \cite{Cham14}
for related results.
\item 
Sufficient conditions, which ensure that the condition \eqref{e:bfi} is satisfied, are provided in 
\cite[Proposition 5.3]{30Combettes13}. For instance, \eqref{e:bfi} holds if \eqref{primal2} has at least one solution and 
$(r_1,\ldots, r_s)$ belongs to the strong relative interior of the following set
\[
 \Menge{\Big(\sum_{i=1}^m L_{k,i}x_i-v_k\Big)_{1\leq k\leq s}}{
\begin{cases}
 (\forall i\in\{1,\ldots,m\})\; x_i\in\dom f_i\\
 (\forall k\in\{1,\ldots,s\})\; v_k\in\dom g_k+\dom\ell_k
\end{cases}
}.
\]
\end{enumerate} 
\end{remark}
\subsection{A second class of the stochastic inertial primal-dual splitting methods}
In this subsection, 
we will derive a new class of stochastic inertial primal-dual splitting methods, for the case 
where $(\forall i\in\{1,\ldots,m\})\; A_i =0$. This class of algorithms corresponds 
to the choice $U_1 = \TT$ in Lemma \ref{l:2a}.
\begin{theorem}
\label{mainal1a} 
In Problem \ref{probbb}, set $(\forall i \in \{1,\ldots,m\})\; A_i =0$. 
Let $\beta$ be defined as in \eqref{beta}, and assume $2\beta > 1$.
Let $\varepsilon \in \left]0,\min\{1,\beta\}\right[$, and 
let $(\lambda_n)_{n\in\NN}$ be a sequence 
in $\left[\varepsilon,1\right]$,
let $(\alpha_n)_{n\in\NN}$ be a sequence in $\left[0,1-\varepsilon\right]$.
For every $i\in \{1,\ldots, m\}$, 
 let $(\rh_{i,n})_{n\in\NN}$ be a $\KK_i$-valued, squared integrable random process, and let 
$x_{i,0}$  be a $\KK_i$-valued, squared integrable random vector and set $x_{i,-1}=x_{i,0}$.  
For every $k\in \{1,\ldots,s\}$, 
let $W_{k}\in\BL(\GG_k)$ be self-adjoint and strongly positive,
let $(\og_{i,n})_{n\in\NN}$ be a $\GG_i$-valued, squared integrable random process, and let
$v_{i,0}$  be a $\GG_i$-valued, squared integrable random vector and set $v_{i,-1}=v_{i,0}$.
Then, iterate, for every $n\in\NN$,
\begin{equation}
\label{e:Algomain3b}
 \begin{array}{|l}
(i)\;\operatorname{For}\;i=1,\ldots, m\\
\quad
\begin{array}{|l}
 c_{i,n} = x_{i,n} + \alpha_n(x_{i,n}- x_{i,n-1} )\\
\end{array}\\
(ii)\;\operatorname{For}\;k=1,\ldots, s\\
\quad
\begin{array}{|l}
 d_{k,n} = v_{k,n} + \alpha_n(v_{k,n}- v_{k,n-1} )\\
\end{array}\\
(iii)\;  \operatorname{For}\;i=1,\ldots, m\\
\quad
\begin{array}{|l}
 1.\; s_{i,n} =  c_{i,n}-V_i(\rh_{i,n}-z_i)\\
2.\; y_{i,n} = s_{i,n} -  V_i\sum_{k=1}^s L_{k,i}^*d_{k,n}
\end{array}\quad\\
(iv)\; \operatorname{For}\;k=1,\ldots, s\\
\quad
\begin{array}{|l}
1.\; q_{k,n}= J_{W_kB_{k}^{-1}}
\big(d_{k,n}+W_{k}\big(\sum_{i=1}^m L_{k,i}y_{i,n} - \og_{k,n}-r_k\big)\big)\\
2.\; v_{k,n+1}=v_{k,n}+\lambda_{n}(q_{k,n}-v_{k,n}).\\
\end{array}\\
(v)\;  \operatorname{For}\;i=1,\ldots, m\\
\quad
\begin{array}{|l}
 1.\; p_{i,n} =  s_{i,n}-V_i\sum_{k=1}^sL_{k,i}^*q_{k,n}\\
2.\; x_{i,n+1}=x_{i,n}+\lambda_{n}(p_{i,n}-x_{i,n}).\\
\end{array}\quad\\
\end{array}
\end{equation} 
Set  
\begin{equation}
\label{e:so}
(\forall n\in\NN)\quad
\begin{cases}
 \xk_n:=(x_{1,n},\ldots, x_{m,n},v_{1,n},\ldots,v_{s,n})\\
\rk_n = (\rh_{1,n},\ldots, \rh_{m,n},\og_{1,n},\ldots,\og_{s,n})\\
\FF_n = \sigma(\xk_0,\ldots,\xk_n).\\
\end{cases}
\end{equation}
Suppose that the following conditions are satisfied.
\begin{enumerate}
\item\label{cond:onei} 
$ (\forall n\in \NN)\;
\E[\rk_{n}| \FF_n] = \big((C_i(c_{1,n}\ldots, c_{m,n}))_{1\leq i\leq m}, D_{1}^{-1}d_{1,n},\ldots, D_{s}^{-1}d_{s,n}\big).
$
\item \label{cond:twoii}
$\sum_{n\in\NN}
\E[ \| \sum_{i=1}^m \rh_{i,n}-C_i(c_{1,n}\ldots, c_{m,n})\|^2
+ \sum_{k=1}^s\|  \og_{k,n}- D_{k}^{-1}d_{k,n}\|^2   
| \FF_n] < +\infty.$
\item\label{cond:twoiii}
 $\max_{1\leq i\leq m}\sup_{n\in\NN}\| x_{i,n}-x_{i,n-1}\|<\infty$ a.s.
and  $\max_{1\leq k\leq s}\sup_{n\in\NN}\| v_{k,n}-v_{k,n-1}\|<\infty$ a.s.,
and $\sum_{n\in\NN}\alpha_n < +\infty $.
\end{enumerate}
Then the following hold for some random vector 
$(\overline{x}_1,\ldots,\overline{x}_m, \overline{v}_1,\ldots,\overline{v}_s)$,
$\mathcal{P}\times\mathcal{D}$-valued almost surely.
\begin{enumerate}
 \item
\label{t:2i} $(\forall i\in\{1,\ldots,m\})$\; $x_{i,n}\weakly \overline{x}_i$ and 
$(\forall k\in\{1,\ldots,s\})$\; $v_{k,n}\weakly\overline{v}_k$ almost surely.
\item \label{t:2ii}
Suppose that the operator $(x_i)_{1\leq i\leq m} \mapsto (C_j(x_i)_{1\leq i\leq m})_{1\leq j\leq m}$ 
is demiregular at 
$(\overline{x}_1,\ldots,\overline{x}_m)$, then $(\forall i\in\{1,\ldots,m\})\; $
$x_{i,n}\to \overline{x}_i$ almost surely. 
\item 
\label{t:2iii}
Suppose that there exists $j\in\{1,\ldots,m\}$ such that $D_{j}^{-1}$ is demiregular 
at 
$\overline{v}_j$, then $ 
v_{j,n}$
$\to \overline{v}_j$ almost surely. 
\item
\label{t:2iv}
Suppose that there exists $j\in\{1,\ldots,m\}$ and an operator 
$C\colon\KK_j \to \KK_j$ such that 
$(\forall(x_i)_{1\leq i\leq m}\in(\KK_i)_{1\leq i\leq m})\; C_j(x_1,\ldots,x_m) = Cx_j$ and 
$C$ is demiregular at $\overline{x}_j$, then
$x_{j,n}\to \overline{x}_j$ almost surely. 
\end{enumerate}
\end{theorem}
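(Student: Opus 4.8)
The plan is to follow the proof of Theorem~\ref{t:2} and realise \eqref{e:Algomain3b} as an instance of the master recursion \eqref{e:main1*}, this time with the preconditioner $U=\TT$ in place of $U=\UU$, and then to quote Theorem~\ref{t:1}. Setting $A_i=0$ for every $i$ (so that $\AAA=0$), Lemma~\ref{l:2a}\ref{l:2ai} applies with $\HH=\HHH$, $A=\MM+\SSS$, $B=\QQ$ and $U=\TT$; the cocoercivity estimate \eqref{ee2} then makes $\QQ$ cocoercive with respect to $\|\cdot\|_\TT$ with the constant $\beta$ of \eqref{beta}, and the hypothesis $2\beta>1$ is what ensures that the step size $\gamma_n\equiv1$ is admissible in Algorithm~\ref{a:al1}. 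Since $\mathcal{P}\neq\emp$, Lemma~\ref{l:2a}\ref{l:2aii} gives $\emp\neq\zer(\MM+\SSS+\QQ)\subset\mathcal{P}\times\mathcal{D}$, so the nonemptiness requirement of Problem~\ref{prob1} holds and every limit produced below is automatically $\mathcal{P}\times\mathcal{D}$-valued.

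The central step, and the one I expect to be the main obstacle, is to verify that the explicit blocks (iii)--(v) of \eqref{e:Algomain3b} compute exactly $\yk_n=J_{\TT(\MM+\SSS)}(\ck_n-\TT\rk_n)$ with $\gamma_n=1$, where $\ck_n=(\cc_n,\dd_n)$, $\rk_n=(\rk_n^{\mathrm x},\rk_n^{\mathrm v})$, and $\yk_n=(\pp_n,\qq_n)$ in the primal-dual splitting $\HHH=\KKK\oplus\GGG$. I would rewrite the defining relation as $\TT^{-1}(\ck_n-\yk_n)-\rk_n\in(\MM+\SSS)\yk_n$ and use that $\TT^{-1}$ is block diagonal, $\TT^{-1}=(\VV^{-1},\,\WW^{-1}-\LL\VV\LL^*)$, while $\AAA=0$ removes the primal resolvent from $\MM$. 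The primal line then reads $\VV^{-1}(\cc_n-\pp_n)-\rk_n^{\mathrm x}\in-\zz+\LL^*\qq_n$, giving the \emph{explicit} update $\pp_n=\sss_n-\VV\LL^*\qq_n$ with $\sss_n=\cc_n-\VV(\rk_n^{\mathrm x}-\zz)$, i.e.\ steps (iii).1 and (v).1. Substituting this expression for $\pp_n$ into the dual line and cancelling the common $\LL\VV\LL^*\qq_n$ terms collapses the coupled system to the single resolvent equation $\qq_n=J_{\WW\BB^{-1}}\!\big(\dd_n+\WW(\LL\yy_n-\rk_n^{\mathrm v}-\rr)\big)$ with $\yy_n=\sss_n-\VV\LL^*\dd_n$, which is precisely steps (iii).2 and (iv).1. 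Thus the Schur-complement structure of $\TT$ together with the vanishing of $\AAA$ forces the Gauss--Seidel order dual-then-primal recorded in \eqref{e:Algomain3b}.

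With this equivalence established, I would translate the hypotheses into those of Theorem~\ref{t:1}: condition~(i) is \ref{cond:one1} once $\QQ\ck_n=\big((C_i(\cc_n))_{1\le i\le m},(D_k^{-1}d_{k,n})_{1\le k\le s}\big)$ is inserted, condition~(ii) is \ref{cond:two2} with $\gamma_n\equiv1$, and, since $\|\xk_n-\xk_{n-1}\|^2=\sum_i\|x_{i,n}-x_{i,n-1}\|^2+\sum_k\|v_{k,n}-v_{k,n-1}\|^2$ is a finite sum, the boundedness parts of condition~(iii) yield $\sup_n\|\xk_n-\xk_{n-1}\|^2<\infty$, so \ref{cond:three3} holds as well. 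Theorem~\ref{t:1}\ref{t:1i} then gives $\xk_n\weakly(\overline{\xx},\overline{\vv})$ a.s., with value in $\mathcal{P}\times\mathcal{D}$, which read componentwise is conclusion~(i).

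Finally I would obtain the strong-convergence statements from Theorem~\ref{t:1}\ref{t:1ia}--\ref{t:1ii}. Part~\ref{t:1ia} yields $\QQ\xk_n\to\QQ\overline{\xk}$, that is $C_i(\xx_n)\to C_i(\overline{\xx})$ for each $i$ and $D_k^{-1}v_{k,n}\to D_k^{-1}\overline{v}_k$ for each $k$. Feeding these convergences, together with the weak convergence of the corresponding components already obtained, into the definition of demiregularity exactly as in the proof of Theorem~\ref{t:2} produces conclusion~(ii) (demiregularity of the primal map $(x_i)_i\mapsto(C_j(x_i)_i)_j$), conclusion~(iii) (demiregularity of some $D_j^{-1}$), and conclusion~(iv) (the separable case $C_j(\xx)=Cx_j$); each is just a matter of reading off strong convergence of the relevant coordinate from its weak convergence and the norm convergence of its image under $\QQ$.
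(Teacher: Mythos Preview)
Your proposal is correct and follows essentially the same route as the paper: you recast \eqref{e:Algomain3b} as the master iteration \eqref{e:main1*} with $U=\TT$ and $\gamma_n\equiv1$ by exploiting the block-diagonal form of $\TT^{-1}$ and the vanishing of $\AAA$, then translate hypotheses (i)--(iii) into \ref{cond:one1}--\ref{cond:three3} and invoke Theorem~\ref{t:1} together with Lemma~\ref{l:2a}\ref{l:2aii}. The only cosmetic difference is direction: the paper starts from the explicit updates and manipulates them into the inclusion $\TT^{-1}(\ck_n-\yk_n)-\rk_n\in(\MM+\SSS)\yk_n$ (see \eqref{eee1}--\eqref{eee2}), whereas you start from that inclusion and solve it for $(\pp_n,\qq_n)$, but the algebraic content---in particular the cancellation of the $\LL\VV\LL^*\qq_n$ terms that you highlight---is identical.
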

\begin{proof} 
Set 
\begin{equation}
\label{e:so1}
\begin{cases}
\xx_{n} &= (x_{1,n},\ldots,x_{m,n})\\
\cc_{n} &= (c_{1,n},\ldots,c_{m,n})\\
\sss_{n} &= (s_{1,n},\ldots,s_{m,n})\\
\yy_{n} &= (y_{1,n},\ldots,y_{m,n})\\
\pp_{n} &= (p_{1,n},\ldots,p_{m,n})\\
\end{cases}
\quad
\text{and}
\quad
\begin{cases}
\vv_{n} &= (v_{1,n},\ldots,v_{s,n})\\
\dd_{n} &= (d_{1,n},\ldots,d_{s,n})\\
\qq_{n} &= (q_{1,n},\ldots,q_{s,n})\\
\bb_{n} &= (\og_{1,n},\ldots,\og_{s,n})\\
\aaa_{n} &= (\rh_{1,n},\ldots,\rh_{m,n}).
\end{cases}
\end{equation}
Using the notation introduced in \eqref{e:so} and \eqref{e:so1}, the definition of the operators $\VV,\WW, \LL$ in \eqref{VeW}, 
the definition of $\BB$ and 
the reference vectors $\zz,\rr$ as in \eqref{e:acl1},
we can rewrite \eqref{e:Algomain3b} as 
\begin{equation}
\label{e:Algomain3c}
(\forall n\in\NN)\quad
 \begin{array}{|l}
 \cc_{n} = \xx_{n} + \alpha_n(\xx_{n}- \xx_{n-1} )\\
 \dd_{n} = \vv_{n} + \alpha_n(\vv_{n}- \vv_{n-1} )\\
  \sss_{n} =  \cc_{n}-\VV(\aaa_{n}-\zz)\\
 \yy_{n} = \sss_{n} -  \VV \LL^*\dd_{n}\\
 \qq_{n}= J_{\WW \BB^{-1}}
\big(\dd_{n}+\WW\big(\LL \yy_{n} - \bb_{n}-\rr\big)\big)\\
 \pp_{n} =  \sss_{n}-\VV \LL^*\qq_{n}\\
 \xx_{n+1}=\xx_{n}+\lambda_{n}(\pp_{n}-\xx_{n})\\
 \vv_{n+1}=\vv_{n}+\lambda_{n}(\qq_{n}-\vv_{n}).\\
\end{array}
\end{equation} 
Now, we have
\begin{alignat}{2}
\label{eee1}
&\quad(\forall n\in\NN)\quad\qq_n= J_{\WW \BB^{-1}}
\big(\dd_{n}+\WW\big(\LL \yy_{n} - \bb_{n}-\rr\big)\big)\notag\\
&\Leftrightarrow (\forall n\in\NN)\quad \WW^{-1}(\dd_n-\qq_n) + (\LL\yy_n-\bb_n-\rr)\in \BB^{-1}\qq_n\notag\\
&\Leftrightarrow (\forall n\in\NN)\quad \WW^{-1}(\dd_n-\qq_n) + (\LL\sss_n-\LL\VV\LL^*\dd_n)-\bb_n\in\rr + \BB^{-1}\qq_n\notag\\
&\Leftrightarrow  (\forall n\in\NN)\quad \WW^{-1}(\dd_n-\qq_n) -\LL\VV\LL^*(\dd_n-\qq_n)+ \LL\sss_n -\LL\VV\LL^*\qq_n-\bb_n
\in\rr + \BB^{-1}\qq_n\notag\\
&\Leftrightarrow (\forall n\in\NN)\quad \WW^{-1}(\dd_n-\qq_n) -\LL\VV\LL^*(\dd_n-\qq_n)+ \LL\pp_n-\bb_n
\in\rr + \BB^{-1}\qq_n.
\end{alignat} 
Since $\AAA =0$, we next have 
\begin{alignat}{2}
\label{eee2}
&\quad(\forall n\in\NN)\quad \pp_n = \sss_n-\VV\LL^*\qq_n\notag\\
&\Leftrightarrow\quad(\forall n\in\NN)\quad \VV^{-1}(\cc_n-\pp_n)-\LL^*\qq_n-\aaa_n \in -z + \AAA\pp_n.
\end{alignat}
Now, by setting $(\forall n\in\NN)\; \yk_n = (\pp_n,\qq_n), \ck_n = (\cc_n,\dd_n)$, 
by using \eqref{ee2} and the definition of $\TT$ in \eqref{e:UeT} and $\MM, \SSS$ in \eqref{e:2a}, we obtain
\begin{equation}
(\forall n\in\NN)\quad\TT^{-1}(\ck_n-\yk_n) -\rk_n \in \MM\yk_n + \SSS\yk_n,
\end{equation}
which is equivalent to 
\begin{equation}
(\forall n\in\NN)\quad \yk_n = J_{\TT(\MM+\SSS)}( \ck_n- \TT\rk_n) 
= J_{\TT(\MM+\SSS)}( \ck_n- \TT\rk_n).
\end{equation}
Therefore, \eqref{e:Algomain3c} becomes 
\begin{equation}
\label{e:Algomain3d}
(\forall n\in\NN)\quad
 \begin{array}{|l}
 \ck_n = \xk_{n} + \alpha_n(\xk_{n}- \xk_{n-1} )\\
  \yk_n = J_{\TT(\MM+\SSS)}( \ck_n- \TT\rk_n)\\
\xk_{n+1}=\xk_{n}+\lambda_{n}(\yk_n-\xk_{n}),\\
 \end{array}
\end{equation} 
which is a special instance of the iteration \eqref{e:main1*} with 
$(\forall n\in\NN)\;\gamma_n =1\in \left]\varepsilon,(2-\varepsilon)\beta \right] $.
We next see that conditions \ref{cond:onei}-\ref{cond:twoiii} can be rewritten in the space $\HHH$ 
defined in \eqref{e:HeK} as 
\begin{enumerate}
\item[(a1)]\label{cs} 
$ (\forall n\in \NN)\;
\E[\rk_{n}| \FF_n] = \QQ\ck_n.
$
\item [(b1)] \label{css}
$\sum_{n\in\NN} \E[ \|  \rk_{n}-\QQ\ck_n\|^2| \FF_n] < +\infty$ a.s.
\item[(c1)] $\sup_{n\in\NN}\| \xk_{n}-\xk_{n-1}\| < \infty$ a.s.
and $\sum_{n\in\NN}\alpha_n < +\infty $.
\end{enumerate}
Therefore, all the assumptions in Algorithm \ref{a:al1} and Theorem \ref{t:1} are satisfied.

\ref{t:2i}: In view of Theorem \ref{t:1}\ref{t:1i}, $\xk_n\weakly (\overline{\xx},\overline{\vv})$ which is equivalent to 
$(\forall i\in\{1,\ldots,m\})\; x_{i,n}\weakly \overline{x}_i$ and $(\forall k\in\{1,\ldots,s\})\; v_{k,n}\weakly \overline{v}_k$.

\ref{t:2ii}$\&$ \ref{t:2iii}: By Theorem  \ref{t:1}\ref{t:1ia}, we have $\QQ\xk_n\to \QQ\overline{\xk}$ which is equivalent to 
\begin{equation}
\begin{cases}
 (\forall i\in\{1,\ldots,m\})\quad C_i(x_{1,n},\ldots,x_{m,n}) \to C_i(\overline{x}_{1},\ldots,\overline{x}_m)\\
 (\forall k\in\{1,\ldots,s\})\quad D_{k}^{-1}v_{k,n} \to D_{k}^{-1}\overline{v}_k.\\
\end{cases}
\end{equation}
Therefore, if the operator $(x_i)_{1\leq i\leq m} \mapsto (C_j(x_i)_{1\leq i\leq m})_{1\leq j\leq m}$ is demiregular at 
$(\overline{x}_1,\ldots,\overline{x}_m)$, we obtain $(\forall i\in\{1,\ldots,m\})\; x_{i,n}\to\overline{x}_i $. By the same 
reason,  if there exists $j\in \{1,\ldots,m\}$ such that $D_{j}^{-1}$ is demiregular at $\overline{v}_j$, the 
$v_{j,n}\to\overline{v}_j $.

\ref{t:2iv} This conclusion follows from the definition of the demiregular operators as in \ref{t:2iii}.
\end{proof}
\begin{corollary}
\label{mainal1a} 
In Example \ref{ex:prob2a}, set $(\forall i \in \{1,\ldots,m\})\; f_i =0$.
For every $i\in \{1,\ldots, m\}$, 
let $V_{i}\in\BL(\KK_i)$ be self-adjoint and strongly positive.
 Let $\nu_0$  be a strictly positive number such that \eqref{cos1} is satisfied.
For every $k\in \{1,\ldots, s\}$, 
let $W_{k}\in\BL(\GG_k)$ be self-adjoint and strongly positive.
 Let $\mu_0$  be a strictly positive number such that \eqref{cos2} is satisfied. 
Let $\beta$ be defined as in \eqref{beta} such that $2\beta > 1$, 
 let $\varepsilon \in \left]0,\min\{1,\beta\}\right[$, and 
let $(\lambda_n)_{n\in\NN}$ be a sequence 
in $\left[\varepsilon,1\right]$, 
let $(\alpha_n)_{n\in\NN}$ be a sequence in $\left[0,1-\varepsilon\right]$.
For every $i\in \{1,\ldots, m\}$, 
 let $(\rh_{i,n})_{n\in\NN}$ be a $\KK_i$-valued, squared integrable random process, and let
$x_{i,0}$  be a $\KK_i$-valued, squared integrable random vector and set $x_{i,-1}=x_{i,0}$.  
For every $k\in \{1,\ldots,s\}$, 
let $W_{k}\in\BL(\GG_k)$ be  self-adjoint and strongly positive,
let $(\og_{i,n})_{n\in\NN}$ be a $\GG_i$-valued, squared integrable random process, and let
$v_{i,0}$  be a $\GG_i$-valued, squared integrable random vector and set $v_{i,-1}=v_{i,0}$.
Then, iterate, for every $n\in\NN$,
\begin{equation}
\label{e:Algomain3b}
 \begin{array}{|l}
(i)\;\operatorname{For}\;i=1,\ldots, m\\
\quad
\begin{array}{|l}
 c_{i,n} = x_{i,n} + \alpha_n(x_{i,n}- x_{i,n-1} )\\
\end{array}\\
(ii)\;\operatorname{For}\;k=1,\ldots, s\\
\quad
\begin{array}{|l}
 d_{k,n} = v_{k,n} + \alpha_n(v_{k,n}- v_{k,n-1} )\\
\end{array}\\
(iii)\;  \operatorname{For}\;i=1,\ldots, m\\
\quad
\begin{array}{|l}
 1.\; s_{i,n} =  c_{i,n}-V_i(\rh_{i,n}-z_i)\\
2.\; y_{i,n} = s_{i,n} -  V_i\sum_{k=1}^s L_{k,i}^*d_{k,n}
\end{array}\quad\\
(iv)\; \operatorname{For}\;k=1,\ldots, s\\
\quad
\begin{array}{|l}
1.\; q_{k,n}= \prox_{g_{k}^{*}}^{W_{k}^{-1}}
\big(d_{k,n}+W_{k}\big(\sum_{i=1}^m L_{k,i}y_{i,n} - \og_{k,n}-r_k\big)\big)\\
2.\; v_{k,n+1}=v_{k,n}+\lambda_{n}(q_{k,n}-v_{k,n}).\\
\end{array}\\
(v)\;  \operatorname{For}\;i=1,\ldots, m\\
\quad
\begin{array}{|l}
 1.\; p_{i,n} =  s_{i,n}-V_i\sum_{k=1}^sL_{k,i}^*q_{k,n}\\
2.\; x_{i,n+1}=x_{i,n}+\lambda_{n}(p_{i,n}-x_{i,n}).\\
\end{array}\quad\\
\end{array}
\end{equation} 
Set  
\begin{equation}
\label{e:so2}
(\forall n\in\NN)\quad
\begin{cases}
 \xk_n=(x_{1,n},\ldots, x_{m,n},v_{1,n},\ldots,v_{s,n})\\
\rk_n = (\rh_{1,n},\ldots, \rh_{m,n},\og_{1,n},\ldots,\og_{s,n})\\
\FF_n = \sigma(\xk_0,\ldots,\xk_n).\\
\end{cases}
\end{equation}
Suppose that the following conditions are satisfied.
\begin{enumerate}
\item\label{cond:onei} 
$(\forall n\in \NN)\;
\E[\rk_{n}| \FF_n] = \big((\nabla_i\varphi(c_{1,n}\ldots, c_{m,n}))_{1\leq i\leq m},
 \nabla\ell_{1}^*(d_{1,n}),\ldots, \nabla\ell_{s}^*(d_{s,n})\big).
$
\item \label{cond:twoii}
$\sum_{n\in\NN}
\E[ \| \sum_{i=1}^m \rh_{i,n}-\nabla_i\varphi(c_{1,n}\ldots, c_{m,n}) \|^2
+ \sum_{k=1}^s\|  \og_{k,n}-\nabla\ell_{k}^*(d_{k,n}) \|^2   
| \FF_n] < +\infty.$
\item\label{cond:twoiii}
 $\max_{1\leq i\leq m}\sup_{n\in\NN}\| x_{i,n}-x_{i,n-1}\| < \infty $ a.s.
and  $\max_{1\leq k\leq s}\sup_{n\in\NN}\| v_{k,n}-v_{k,n-1}\|< \infty $ a.s.,
and $\sum_{n\in\NN}\alpha_n < +\infty $.
\end{enumerate}
Then the following hold for some random vector 
$(\overline{x}_1,\ldots,\overline{x}_m, \overline{v}_1,\ldots,\overline{v}_s)$,
$\mathcal{P}\times\mathcal{D}$-valued almost surely.
\begin{enumerate}
 \item
\label{t:2i} $(\forall i\in\{1,\ldots,m\})$\; $x_{i,n}\weakly \overline{x}_i$ and 
$(\forall k\in\{1,\ldots,s\})$\; $v_{k,n}\weakly\overline{v}_k$ almost surely.
\item \label{t:2ii}
Suppose that the function $\varphi$ 
is uniformly convex at 
$(\overline{x}_1,\ldots,\overline{x}_m)$, then $(\forall i\in\{1,\ldots,m\})\; $
$x_{i,n}\to \overline{x}_i$ almost surely. 
\item 
\label{t:2iii}
Suppose that there exists $j\in\{1,\ldots,m\}$ such that $\ell_{j}^{*}$ is uniformly convex
 at $\overline{v}_j$, then $v_{j,n}$
$\to \overline{v}_j$ almost surely. 
\item
\label{t:2iv}
Suppose that  $(\forall  (x_1,\ldots,x_m)\in \KK_1\times\ldots\times\KK_m)\; \varphi(x_1,\ldots,x_m) = \sum_{i=1}^mh_i(x_i)$ where
each $h_i\in\Gamma_0(\KK_i)$ is a convex differentiable function,   
there exists $j\in\{1,\ldots,m\}$ such that $h_j$ is uniformly convex at $\overline{x}_j$, then
$x_{j,n}\to \overline{x}_j$ almost surely. 
\end{enumerate}
\end{corollary}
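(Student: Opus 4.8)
The plan is to recognize this corollary as the specialization of Theorem~\ref{mainal1a} to the minimization setting of Example~\ref{ex:prob2a}, mirroring the proof of the earlier corollary that derived the minimization consequences of Theorem~\ref{t:2}. First I would set up the operator dictionary. Setting $f_i=0$ forces $A_i=\partial f_i=0$, and, following the reduction in \cite[Corollary 5.1]{jota1}, I would take
$$
(\forall i)\;\; C_i=\nabla_i\varphi,\qquad (\forall k)\;\; B_k=\partial g_k,\qquad D_k=\partial\ell_k.
$$
Here $\partial f_i$ and $\partial g_k$ are maximally monotone, and since $\ell_k$ is strongly convex its conjugate $\ell_k^*$ is differentiable, so that $D_k^{-1}=(\partial\ell_k)^{-1}=\partial\ell_k^*=\nabla\ell_k^*$ is single-valued, exactly as Problem~\ref{probbb} requires. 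Under this dictionary the cocoercivity hypotheses \eqref{recoco} and \eqref{recocoo} of Problem~\ref{probbb} become precisely the assumed inequalities \eqref{cos1} and \eqref{cos2}, so every structural assumption of Problem~\ref{probbb} (with $A_i=0$) and of Theorem~\ref{mainal1a} is met; the constant $\beta$ of \eqref{beta} and the standing hypothesis $2\beta>1$ carry over verbatim.

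The second step is to match the iterations. In the operator recursion of Theorem~\ref{mainal1a} the only resolvent is $J_{W_kB_k^{-1}}$, there being no primal resolvent because $A_i=0$ reduces step $(v).1$ to the explicit update $p_{i,n}=s_{i,n}-V_i\sum_k L_{k,i}^*q_{k,n}$. With $B_k=\partial g_k$, the identity
$$
\prox_{g_k^*}^{W_k^{-1}}=J_{(W_k^{-1})^{-1}\partial g_k^*}=J_{W_k(\partial g_k)^{-1}}=J_{W_kB_k^{-1}},
$$
obtained from \eqref{set0} together with \eqref{e:equi}, shows that the proximal recursion \eqref{e:Algomain3b} of the corollary coincides, term by term, with the operator recursion of Theorem~\ref{mainal1a}. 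Since $C_i=\nabla_i\varphi$ and $D_k^{-1}=\nabla\ell_k^*$, the three hypotheses (i)--(iii) of the corollary are exactly the hypotheses of Theorem~\ref{mainal1a} read through the dictionary, so that theorem applies.

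Finally I would transfer the four conclusions. The weak convergence in (i) is immediate from Theorem~\ref{mainal1a}(i). For (ii)--(iv) the only additional ingredient is the standard fact (see \cite{plc2010}) that uniform convexity of a proper lower semicontinuous convex function at a point implies demiregularity of its subdifferential there. Thus uniform convexity of $\varphi$ at $(\overline{x}_1,\ldots,\overline{x}_m)$ makes $\nabla\varphi$, that is the operator $(x_i)_{1\leq i\leq m}\mapsto(C_j(x_i)_{1\leq i\leq m})_{1\leq j\leq m}$, demiregular there and yields (ii); uniform convexity of $\ell_j^*$ at $\overline{v}_j$ makes $D_j^{-1}=\nabla\ell_j^*$ demiregular and yields (iii); and in the separable case $\varphi=\sum_i h_i$ one has $C_j(x_1,\ldots,x_m)=\nabla h_j(x_j)$, so uniform convexity of $h_j$ supplies the demiregular factor $C=\nabla h_j$ required by Theorem~\ref{mainal1a}(iv), yielding (iv). This is essentially a bookkeeping argument; the only genuine verifications are the proximity--resolvent identity above and the uniform-convexity-implies-demiregularity implication, both routine, so I do not anticipate a serious obstacle.
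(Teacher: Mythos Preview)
Your proposal is correct and follows essentially the same approach the paper uses: the paper (as in the proof of the earlier corollary after Theorem~\ref{t:2}) reduces Example~\ref{ex:prob2a} to Problem~\ref{probbb} via the dictionary $A_i=\partial f_i$, $C_i=\nabla_i\varphi$, $B_k=\partial g_k$, $D_k=\partial\ell_k$, invokes \eqref{set0} and \eqref{e:equi} to identify $\prox_{g_k^*}^{W_k^{-1}}=J_{W_kB_k^{-1}}$, and then applies the operator-level theorem. Your additional explicit verification that uniform convexity implies demiregularity (needed to pass from the corollary's hypotheses to those of Theorem~\ref{mainal1a}) is a welcome detail that the paper leaves implicit.
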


\begin{remark}
Let $\tau\in\left]0,+\infty\right[$ and $\sigma\in\left]0,+\infty\right[$. 
If in Problem~\ref{probbb},  $(\forall i\in\{1,\ldots,m\})\; V_i=\tau\Id$ and $C_i$ is $\nu$ cocoercive
for some $\nu\in\left]0,+\infty\right[$,  and 
$(\forall k\in\{1,\ldots,s\})\; W_k=\sigma\Id$ and $D_k^{-1}$ is $\mu$ cocoercive for some $\mu\in\left]0,+\infty\right[$,
then the condition $2\beta>1$ in Algorithm~\ref{mainal1a} is satisfied for $\tau$ and $\sigma$ sufficiently small. 
Indeed, in this case $\beta=\min\{\nu/\tau, (\mu/\sigma)(1-\tau\sigma\|\LL\|^2)\}$.
\end{remark}

\begin{remark} 
The  operator $\TT$ has been first considered in \cite{icip14}, 
and then used in \cite{Davis14,pesquet14}, in the deterministic setting. The
results of this subsection constitute an extension of \cite[Section 4.2]{icip14}
to the stochastic and inertial setting.  See \cite{icip14} for the connections to \cite{Chen11} 
and \cite{loris11}.
\end{remark}

\noindent{\small{\bf Acknowldgments} 
This material is based upon work supported by the Center for Brains, Minds and Machines (CBMM), funded by NSF STC award CCF-1231216.
L. Rosasco acknowledges the financial support of the Italian Ministry of Education, University and Research FIRB project RBFR12M3AC.
S. Villa is member of the Gruppo Nazionale per
l'Analisi Matematica, la Probabilit\`a e le loro Applicazioni (GNAMPA)
of the Istituto Nazionale di Alta Matematica (INdAM). 
Bang Cong Vu's research work is partially funded by Vietnam National Foundation for Science
and Technology Development (NAFOSTED) under Grant No. 102.01-2014.02.  }

\end{document}